\theoremstyle{plain}
\newtheorem{thm}{Theorem}[section]
\newtheorem{cor}[thm]{Corollary}
\newtheorem{prop}[thm]{Proposition}
\theoremstyle{definition}
\newtheorem{defn}[thm]{Definition}
\DeclareMathOperator{\al}{\alpha}
\DeclareMathOperator{\om}{\omega}
\DeclareMathOperator{\sig}{\sigma}
\DeclareMathOperator{\Sig}{\Sigma}
\DeclareMathOperator{\im}{Im}
\DeclareMathOperator{\N}{\mathbb{N}}
\DeclareMathOperator{\Z}{\mathbb{Z}}
\DeclareMathOperator{\F}{\mathbb{F}}
\DeclareMathOperator{\rank}{rank}
\title{Winding number $m$ and $-m$ patterns acting on concordance}
\author{Allison N. Miller}
\begin{document}
\begin{abstract}
We prove that for any winding number $m>0$ pattern $P$ and winding number $-m$ pattern $Q$, there exist knots $K$ such that the minimal genus of a cobordism between $P(K)$ and $Q(K)$ is arbitrarily large. 
This answers a question posed by Cochran-Harvey [CH17]  and generalizes a result of Kim-Livingston [KL05].
\end{abstract}
\maketitle

\section{Introduction}

While most of the investigations of $\mathcal{C}$, the collection  of knots modulo concordance, have focused on its group structure, it is also natural to consider it as a metric space with metric $d(K,J):=g_4(K \#-J)$. 
Cochran and Harvey \cite{CH17} considered this geometric structure, focusing on the metric properties of maps induced by patterns in solid tori.  Following their work, we  consider the distance between two patterns, defined as 
\[ d(P,Q)= \sup_{K \in \mathcal{C}} d(P(K), Q(K)) \in \{0, 1, 2, \dots, \infty\}.
\]

It is natural to ask when two patterns are a finite distance from each other. Cochran and Harvey use Tristram-Levine signatures to give an almost complete characterization of this in terms of winding number. (For a discussion of pattern orientations, including a definition of winding number, we refer the reader to Section~\ref{section:background}.) All results stated here hold in both the smooth and the topological categories, since the constructions are smooth and the obstructions are topological. 
\begin{thm}[Cochran-Harvey \cite{CH17}]\label{thm:ch}
Let $P$ and $Q$ be patterns of winding number $m$ and $n$, respectively. If $n=m$ then $d(P,Q)$ is finite and if $|n| \neq |m|$ then $d(P,Q)$ is infinite. 
\end{thm}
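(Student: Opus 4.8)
The plan is to prove the two implications by entirely different means: the finite-distance statement is geometric, while the infinite-distance statement is detected by Tristram--Levine signatures. For the case $n=m$, the point is that patterns of equal winding number differ by a cobordism supported inside the solid torus, which every satellite operation carries along with bounded genus. Concretely, view $P$ and $Q$ as oriented knots in $V = S^1 \times D^2$. Since both have winding number $m$, they represent the same class $m \in \mathbb{Z} \cong H_1(V)$, so $P \times \{0\}$ and $Q \times \{1\}$ cobound a connected oriented embedded surface $\Sigma \subset V \times [0,1]$; let $G = G(P,Q)$ be its genus, a constant depending only on $P$ and $Q$. For any knot $K$, the satellite embedding of $V$ onto a tubular neighborhood of $K$ extends to $V \times [0,1] \hookrightarrow S^3 \times [0,1]$ and carries $\Sigma$ to a genus-$G$ cobordism from $P(K)$ to $Q(K)$. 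A genus-$G$ cobordism between two knots bounds a genus-$G$ surface for their difference in $B^4$, so $d(P(K),Q(K)) = g_4(P(K) \# -Q(K)) \le G$ for every $K$, giving $d(P,Q) \le G < \infty$.

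For the case $|n| \neq |m|$, I would extract a signature lower bound on $d$ and then realize it by a careful choice of $K$. Recall that for every $\omega \in S^1$ one has $|\sigma_\omega(J)| \le 2 g_4(J)$ and additivity $\sigma_\omega(A \# -B) = \sigma_\omega(A) - \sigma_\omega(B)$, so
\[ 2\, d(P(K), Q(K)) \ge \sup_{\omega}\, \bigl| \sigma_\omega(P(K)) - \sigma_\omega(Q(K)) \bigr|. \]
By Litherland's satellite formula for Tristram--Levine signatures, $\sigma_\omega(P(K)) = \sigma_\omega(P(U)) + \sigma_{\omega^m}(K)$, and likewise $\sigma_\omega(Q(K)) = \sigma_\omega(Q(U)) + \sigma_{\omega^n}(K)$. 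The pattern terms are bounded by a constant $C = 2 g_4(P(U)) + 2 g_4(Q(U))$ independent of $K$ and $\omega$, so
\[ 2\, d(P(K), Q(K)) \ge \sup_\omega\, \bigl| \sigma_{\omega^m}(K) - \sigma_{\omega^n}(K) \bigr| - C. \]
It therefore suffices to produce, for each $N$, a companion $K$ making this supremum at least $N$.

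The construction is where $|m| \neq |n|$ enters. Since $|m| \neq |n|$ forces both $n-m \neq 0$ and $n+m \neq 0$, I can choose $\omega_0 = e^{i\phi}$ so that the three points $\omega_0^m$, $\overline{\omega_0^m}$, and $\omega_0^n$ are all distinct; then fix a short arc $I$ centered at $\omega_0^m$ with $\omega_0^n \notin I \cup \bar I$. Next take a knot $J$ whose Tristram--Levine signature function equals $2$ on $I \cup \bar I$ and $0$ elsewhere, and set $K = \#^{L} J$. Then $\sigma_\omega(K) = 2L$ on $I \cup \bar I$ and $0$ off it, so $\sigma_{\omega_0^m}(K) = 2L$ while $\sigma_{\omega_0^n}(K) = 0$, giving $\bigl|\sigma_{\omega_0^m}(K) - \sigma_{\omega_0^n}(K)\bigr| = 2L$. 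Choosing $L \ge (N+C)/2$ yields $d(P(K),Q(K)) \ge N/2$, and hence $d(P,Q) = \infty$.

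The main obstacle is this last construction: realizing a knot whose signature function is a large, sharply localized bump placed at a prescribed point of the circle, and confirming that the hypothesis $|m| \neq |n|$ is exactly what lets such a bump be seen by $\omega \mapsto \omega^m$ but missed by $\omega \mapsto \omega^n$. Several technical points must be handled with care: signatures should be evaluated away from the roots of the relevant Alexander polynomials (or interpreted via jump-averaged one-sided limits); the realizable bump locations are constrained to algebraic points on $S^1$, so $\omega_0$ and the arc $I$ must be chosen compatibly with what Seifert forms actually produce; and one must verify that the pattern contribution is genuinely bounded uniformly in $K$ and $\omega$.
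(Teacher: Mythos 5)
This theorem is not proved in the paper at all---it is imported wholesale from Cochran--Harvey \cite{CH17}, which the paper describes only as ``using Tristram--Levine signatures''---so the comparison is with their original argument, and your proposal is correct and essentially reproduces it: for $n=m$, a fixed cobordism between $P$ and $Q$ inside $V\times[0,1]$, pushed forward by the satellite embedding $i_K\times\mathrm{id}$, bounds $d(P(K),Q(K))$ uniformly in $K$; for $|n|\neq|m|$, Litherland's formula $\sigma_{P(K)}(\omega)=\sigma_{P(U)}(\omega)+\sigma_K(\omega^m)$ reduces everything to making $|\sigma_K(\omega_0^m)-\sigma_K(\omega_0^n)|$ large, and your localized signature bump (realizable by differences of Cha--Livingston type knots, the same realization result the paper invokes in its own construction) works exactly because $|n|\neq|m|$ forces $n-m\neq0$ and $n+m\neq0$. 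One small repair: if $m=0$ then $\omega_0^m=\overline{\omega_0^m}=1$ and your three points cannot be made distinct (indeed $\sigma_K(1)=0$ always), so in that case place the bump at $\omega_0^n$ instead, i.e.\ swap the roles of $P$ and $Q$; this is harmless since $m$ and $n$ cannot both vanish when $|m|\neq|n|$.
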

We are therefore led to consider whether the distance between a winding number $m$ pattern and a winding number $-m$ pattern can ever be finite. Cochran-Harvey's arguments do not apply in this case: Tristram-Levine signatures are insensitive to the orientation of a knot, and for every winding number $m$ pattern $P$ there is a winding number $-m$ pattern $P^r$ such that $P(K)$ and $P^r(K)$ are always equal as unoriented knots. 
 Nevertheless, the case of $m=1$  was resolved by Kim and Livingston \cite{KimLivingston:2005} by using Casson-Gordon invariants, in a result that seems undeservedly forgotten. Note that the core of the torus, oriented one way, gives a winding number 1 satellite map $K \mapsto K$ and, oriented the other way, gives a winding number $-1$ satellite map $K \mapsto K^r$. 
\begin{thm}[Kim-Livingston \cite{KimLivingston:2005}]\label{thm:KL}
For any $g\geq 0$ there exists a knot $K$ such that $g_4(K\#-K^r)>g$. That is, the identity (winding number +1) and reversal (winding number -1) operators are infinite distance from each other. 
\end{thm}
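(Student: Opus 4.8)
The plan is to reduce the theorem to a single slice-genus lower bound and to detect that genus with Casson--Gordon invariants, which (unlike the Tristram--Levine signatures used in Theorem~\ref{thm:ch}) are sensitive to the orientation of $K$. First I would record the algebra: since reversal and mirroring commute with taking concordance inverses, $g_4(K\#-K^r)$ is exactly the minimal genus of a cobordism in $S^3\times[0,1]$ between $K$ and its reverse $K^r$, so it suffices to exhibit $K$ that is ``quantitatively non-reversible.'' Passing to the $q$-fold cyclic branched cover for a suitable prime power $q$, reversal induces an involution $r_*$ on $A:=H_1(\Sigma_q(K))$ preserving the linking form, and the Casson--Gordon signature invariant $\tau(K,\chi)$ attached to a prime-power-order character $\chi\colon A\to\Z/p$ satisfies $\tau(K^r,\chi)=\tau(K,r_*\chi)$. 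Thus the whole problem becomes: arrange that $\tau(K,\chi)$ differs from $\tau(K,r_*\chi)$ for ``many independent'' characters $\chi$, by an amount too large to be explained by a low-genus cobordism.

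Next I would build the input knots. Using a genus-one pattern with two bands and infecting the bands by auxiliary knots $J_1$ and $J_2$, Litherland's satellite formula expresses the relevant Casson--Gordon invariant as a sum of integrals of the Tristram--Levine signature functions $\sigma_{J_1},\sigma_{J_2}$ over prescribed arcs of the circle. Reversing the knot interchanges the two bands, hence replaces this by the expression built from $\sigma_{J_2},\sigma_{J_1}$; choosing $J_1,J_2$ with very different signature profiles makes $\tau(B,\chi)-\tau(B,r_*\chi)$ as large as desired for a character $\chi$ of a prescribed prime order $p$, with all of $A$ supported at $p$. This produces, for each prime $p$ in an infinite set, a building block $B_p$ whose reversal asymmetry is detected at the prime $p$ alone.

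Then comes amplification together with the genus obstruction. Setting $K=\#_{p\in S}B_p$ for a finite set $S$ of $N$ distinct primes, the branched-cover homology splits as $A=\bigoplus_{p\in S}A_p$ over distinct primes, the detecting characters live in distinct $p$-primary summands, and $\tau$ is additive under connected sum, so the asymmetries do not interfere. The crucial input is the Casson--Gordon slice-genus bound of Gilmer and Gilmer--Livingston: if $g_4(J)\le g$ for $J=K\#-K^r$, then $H_1(\Sigma_q(J))$ contains a subgroup $M$ metabolizing the linking form modulo a summand of rank at most $2g$, and every prime-power-order character $\chi$ vanishing on $M$ has $|\tau(J,\chi)|$ bounded by a linear function of $g$ (plus a term depending only on $q$ and the order of $\chi$). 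Because the detecting characters are spread across $N$ distinct prime summands, such an $M$ can interact nontrivially with at most $O(g)$ of them, so once $N$ is large compared to $g$ there survives a character that both vanishes on $M$ and carries an asymmetry exceeding the linear-in-$g$ bound, a contradiction. Hence $g_4(K\#-K^r)>g$ for $N$ large, which is the theorem.

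The main obstacle is precisely this last step: making rigorous the interaction between a genus-$g$ metabolizer and the prime-indexed detecting characters. One must check that the subgroup $M$ forced by a genus-$g$ cobordism between $K$ and $K^r$ cannot simultaneously absorb all $N$ detecting characters, and that on a surviving character the Casson--Gordon invariant of $J$ genuinely beats Gilmer's bound; this requires combining the distinct-prime bookkeeping, the additivity of $\tau$, the behavior of $r_*$ on the linking form, and the exact form of the genus inequality. By contrast, once the blocks $B_p$ are fixed, the Tristram--Levine signature computations feeding Litherland's formula are essentially routine.
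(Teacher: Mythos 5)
Your reduction to a slice-genus bound and your use of Litherland-style satellite formulas are reasonable, but the final step --- the one you yourself flag as the main obstacle --- is not merely incomplete: the mechanism you propose (spreading detecting characters over $N$ distinct primes and arguing a metabolizer can only ``absorb'' $O(g)$ of them) cannot work, because a single subgroup defeats it at every prime simultaneously. Write $J = K \# -K^r$ and let $\al \colon H_1(\Sig_n(K)) \to H_1(\Sig_n(-K^r))$ be the canonical identification of Proposition~\ref{prop:reversal}. The graph $G = \{(x,\al(x))\}$ has square-root order and is self-annihilating for the linking form of $J$, so it is a legitimate metabolizer; and a character $(\chi,\chi')$ vanishes on $G$ exactly when $\chi' = -\chi \circ \al^{-1}$, in which case Proposition~\ref{prop:reversal}(2) and conjugation-invariance of Casson--Gordon signatures give
\[
\sig_1\tau\bigl(J,(\chi,\chi')\bigr) \;=\; \sig_1\tau(K,\chi) + \sig_1\tau\bigl(-K^r, -\chi\circ\al^{-1}\bigr) \;=\; \sig_1\tau(K,\chi) - \sig_1\tau(K,\chi) \;=\; 0 .
\]
If $K = \#_{p\in S} B_p$, the direct sum of these graphs over all $p \in S$ is again a metabolizer of $H_1(\Sig_n(J))$, and \emph{every} prime-power character vanishing on it has vanishing Casson--Gordon signature, no matter how large $N=|S|$ is compared to $g$. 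So your counting claim is false, and with only the properties you invoke (order and self-annihilation of $M$, additivity of $\tau$, behavior of $r_*$ on the linking form) one cannot even rule out $g_4(J)=0$.

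The missing idea is covering transformation invariance, which is exactly what distinguishes $K\#-K^r$ from the slice knot $K\#-K$. Because reversal inverts the deck transformation ($t_{-K^r}\cdot \al(x) = \al(t_K^{-1}\cdot x)$), the graph $G$ above is \emph{not} invariant under the deck transformation of $\Sig_n(J)$, whereas the refinement of Gilmer's theorem (Theorem~\ref{thm:Gilmer}) guarantees that the subgroups produced by a genus-$g$ surface \emph{are} invariant; for $K\#-K$ the graph is invariant and the obstruction rightly evaporates. The remaining work --- which is the heart of both Kim--Livingston's proof and the paper's --- is to show that every \emph{invariant} subgroup of the relevant corank misses a detecting character. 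The paper does this not with distinct primes but at a single odd prime $p$: it arranges $H_1(\Sig_p(J_0);\F_q) \cong \bigoplus_j \F_q[t]/\langle t-a_j\rangle$ into distinct eigenspaces of the deck transformation with $a_{p-j}\equiv a_j^{-1}$, infects eigenvector curves for $a_j$ and $a_j^{-1}$ with \emph{different} knots $A$ and $B$, takes $g+1$ connected summands, and uses the fact that an invariant $\F_q$-subspace is spanned by eigenvectors to exhibit a character supported on a single eigenspace that vanishes on any given invariant subgroup of rank $(p-1)(2g+1)$. To salvage your outline you would need to (i) import the invariance statement into Gilmer's bound and (ii) replace the prime-counting by an eigenspace argument of this kind --- at which point you have essentially reconstructed the paper's proof.
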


It seems to have been assumed that the extension of Theorem~\ref{thm:KL} to the case of general $m~>~0$ would require substantial advances in the computation of Casson-Gordon invariants (see e.g. Remark 6.15 of \cite{CH17}). It is therefore perhaps somewhat surprising that we prove the following result while computationally only using Litherland's work of \cite{Lith84}; on the other hand, the potential relevance of formulae for Casson-Gordon invariants of satellite knots to the problem is clear. 

\begin{thm}\label{theorem:main}
Let $m>0$
 and  $P$ and $Q$ be patterns of winding number $m$ and $-m$, respectively. Then $d(P,Q)$ is infinite. 
\end{thm}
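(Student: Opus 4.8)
The plan is to reduce, via the triangle inequality and Theorem~\ref{thm:ch}, to a ``self-reverse'' statement, and then to obstruct small cobordism genus using Casson--Gordon invariants computed through Litherland's satellite formula \cite{Lith84}. First I would fix a winding number $m$ pattern $P$ and let $P^r$ denote the reverse pattern, so that $P^r$ has winding number $-m$ and $P^r(K)=P(K)^r$ for every $K$. Given an arbitrary winding number $-m$ pattern $Q$, Theorem~\ref{thm:ch} gives $d(Q,P^r)<\infty$, since $Q$ and $P^r$ both have winding number $-m$; the triangle inequality then yields $d(P,Q)\ge d(P,P^r)-d(Q,P^r)$. Hence it suffices to prove $d(P,P^r)=\infty$, i.e. to produce knots $K$ with $g_4\bigl(P(K)\#-P^r(K)\bigr)$ arbitrarily large. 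This is exactly the generalization of Theorem~\ref{thm:KL}, which is the case $m=1$ with $P$ the core of the torus.

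To bound $g_4$ from below I would invoke the standard Casson--Gordon genus estimate in the form due to Gilmer: if $J$ bounds a surface of genus $g$ in $B^4$, then for each prime-power-fold branched cover $\Sigma_n(J)$ there is a metabolizer $H\le H_1(\Sigma_n(J))$ for the linking form such that every prime-power-order character vanishing on $H$ has Casson--Gordon signature bounded by a constant $C(n,p)$ times $g$, with $C(n,p)$ independent of $J$. The strategy is then to choose the companion $K$ complicated enough --- concretely, a connected sum of many copies of a fixed knot with a large, sharply localized Tristram--Levine signature jump --- that for \emph{every} admissible metabolizer $H$ there survives a vanishing character $\chi$ with $|\sigma(J,\chi)|$ growing linearly in the number of summands. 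Since the genus estimate forces this quantity to be $\le C(n,p)\,g$, we conclude $g\to\infty$. The combinatorial core here is the familiar counting argument: a metabolizer constrains only ``half'' the available characters, so enough independent characters guarantee a good survivor.

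The computational engine is Litherland's formula \cite{Lith84}, which expresses the Casson--Gordon signatures of a satellite as a fixed pattern contribution plus a sum of Tristram--Levine signatures of the companion $K$, evaluated at roots of unity $\zeta_n^{a}$ determined by the winding number and by the restriction of the character to the lifted companion torus (whose splitting is governed by $\gcd(n,m)$). Writing $J=P(K)\#-P^r(K)$ and splitting $H_1(\Sigma_n(J))=A\oplus A$ according to the two connected summands, a character $\chi=(\chi_1,\chi_2)$ contributes $\sigma(J,\chi)=\bigl[c(\chi_1)-c(\chi_2)\bigr]+\bigl[\Sigma_K(\chi_1)-\Sigma_K(\chi_2)\bigr]$, where $c$ is the bounded pattern term and $\Sigma_K$ is the companion sum of Litherland. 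I would then arrange $\chi_1$ to detect the companion's signature jump at the relevant root while $\chi_2$ does not, so that the two companion contributions reinforce rather than cancel.

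The main obstacle --- and precisely the reason Tristram--Levine signatures alone (hence the methods of Cochran--Harvey) are blind here --- is this non-cancellation. For the diagonal character $\chi_1=\chi_2$ the companion terms cancel identically, and more generally, because $\sigma_K(\zeta)=\sigma_K(\bar\zeta)$, any conjugation symmetry relating the two summands forces cancellation; this reflects the fact that $P(K)$ and $P^r(K)$ agree as unoriented knots. The crux is therefore to understand how reversing the pattern's orientation, and hence the winding number from $+m$ to $-m$, changes the way characters on $\Sigma_n(J)$ pair against the lifted companion, and to exhibit, for each metabolizer, a vanishing character whose two companion evaluations land at roots of unity with genuinely different Tristram--Levine signatures. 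Carrying this out uniformly in $m$ --- tracking the $\gcd(n,m)$-fold splitting of the companion torus and the resulting exponents $a$ in Litherland's formula, and checking compatibility with the metabolizer counting above --- is the technical heart of the argument, and is what was previously expected to require new Casson--Gordon computations but in fact follows from \cite{Lith84}.
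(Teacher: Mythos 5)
Your high-level frame (triangle inequality via Theorem~\ref{thm:ch}, then Gilmer's bound plus Litherland's satellite formula) matches the paper's, but your plan leaves unresolved exactly the points where the proof actually lives, and on the first of them you head down a harder road than necessary. You reduce to showing $d(P,P^r)=\infty$ for the \emph{arbitrary} given pattern $P$, and then propose to push Litherland's general formula through ``the $\gcd(n,m)$-fold splitting of the companion torus.'' The paper reduces further: by Theorem~\ref{thm:ch}, $d(P, C_{m,1})$ and $d(Q, C_{m,1}^r)$ are both finite, so it suffices to treat the single pattern $C_{m,1}$, the $(m,1)$-cable, which satisfies $C_{m,1}(U)=U$; one then chooses the branched-cover prime $p$ \emph{coprime to} $m$. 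With these two choices Litherland's relatively-prime theorem (Theorem~\ref{thm: litherlandrelprime}) gives a canonical, covering-transformation-invariant isomorphism $H_1(\Sigma_p(K)) \cong H_1(\Sigma_p(C_{m,1}(K)))$ under which Casson--Gordon signatures agree exactly; there is no splitting of the companion torus to track at all, and the whole theorem collapses to the $m=1$ statement that $g_4(K\#-K^r)$ can be made large by $p$-fold cover Casson--Gordon signatures (Theorem~\ref{thm:exampleswork}, plus Proposition~\ref{cor:cabling} and the identity $-(C_{m,1}(K))^r = -C_{m,1}^r(K)$). This is precisely the observation that makes the problem tractable without new Casson--Gordon computations; your proposal defers it as ``the technical heart'' rather than resolving it.

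Second, your companion construction --- a connected sum of many copies of one fixed knot with a sharp signature jump --- will not produce the asymmetry you correctly identify as the crux. The point of Proposition~\ref{prop:reversal} is that passing from $K$ to $-K^r$ \emph{inverts} the covering transformation: the $a$-eigenspace of $H_1(\Sigma_p(K),\F_q)$ corresponds to the $a^{-1}$-eigenspace of $H_1(\Sigma_p(-K^r),\F_q)$. To exploit this one needs a base knot $J_0$ with $H_1(\Sigma_p(J_0),\F_q)\cong \F_q[t]/\langle\Phi_p(t)\rangle$ splitting into distinct eigenlines whose eigenvalues are paired by $a_{p-j}=a_j^{-1}$, and then one infects the eigenvector curves for $a_1,\dots,a_{(p-1)/2}$ with one knot $A$ and the curves for the inverse eigenvalues with a \emph{different} knot $B$, with $\sigma_B(\omega_q)$ much larger than $\sigma_A(\omega_q)$. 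Only this asymmetric placement defeats the cancellation; a connected sum of copies of a single knot is symmetric under exactly the relabeling that kills the obstruction. Finally, your appeal to Gilmer's bound with a generic metabolizer-counting argument misses that the metabolizer must be taken \emph{covering transformation invariant} --- a refinement the paper extracts from Gilmer's proof, absent from his original statement. Invariance is what guarantees the metabolizer is spanned by eigenvectors, which is what allows one to build a vanishing character supported on a single eigenspace $B_{j_0}$ and then land in one of the two non-cancelling cases. Without these ingredients the proposal is a program, not a proof.
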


 Theorems~\ref{thm:ch} and~\ref{theorem:main}  combine to give the following.
 
\begin{cor}
Let $P$ and $Q$ be patterns of winding number $m$ and $n$, respectively. 
Then the distance between $P$ and $Q$ is finite if and only if $m=n$. 
\end{cor}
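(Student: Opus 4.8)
\emph{Plan.} The first move is to reduce to a single convenient pair of patterns. The distance $d$ is a pseudometric on the set of patterns: for any patterns $P, R, Q$ and any companion $K$ the ordinary triangle inequality $d(P(K), Q(K)) \le d(P(K), R(K)) + d(R(K), Q(K))$ holds in $\mathcal{C}$, and taking suprema over $K$ gives $d(P,Q) \le d(P,R) + d(R,Q)$. By Theorem~\ref{thm:ch}, any two patterns of the same winding number are a finite distance apart. Hence, given a winding number $m$ pattern $P$, a winding number $-m$ pattern $Q$, and any fixed reference pair $(P_0, Q_0)$ of winding numbers $(m,-m)$, one has $d(P,Q) \ge d(P_0, Q_0) - d(P_0,P) - d(Q_0,Q)$, with the last two terms finite. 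Therefore it suffices to produce a \emph{single} pair $(P_0, Q_0)$ of winding numbers $(m,-m)$ with $d(P_0, Q_0) = \infty$.

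For this I would take $Q_0 = P_0^r$, the orientation reversal of $P_0$ described in the introduction, so that $P_0(K)$ and $Q_0(K)$ agree as unoriented knots and $d(P_0(K), Q_0(K)) = g_4(P_0(K) \# -Q_0(K))$. When $m=1$ and $P_0$ is the core, this is exactly the knot $K \# -K^r$ of Kim--Livingston (Theorem~\ref{thm:KL}), so the goal is to generalize their argument to satellites of arbitrary positive winding number. A consistency check, using Litherland's satellite signature formula $\sigma_{P_0(K)}(\omega) = \sigma_{P_0(U)}(\omega) + \sigma_K(\omega^m)$ together with the symmetry $\sigma_K(\omega^{-m}) = \sigma_K(\omega^m)$, shows that every Tristram--Levine signature of $P_0(K) \# -Q_0(K)$ vanishes; exactly as in the Kim--Livingston case, abelian invariants are useless and a metabelian invariant is forced.

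The genus lower bound then comes from Casson--Gordon invariants. I would invoke the Casson--Gordon/Gilmer slice-genus obstruction in the form: if $g_4(J) \le g$, then the Casson--Gordon signature invariants $\tau(J,\chi)$ must vanish up to a controlled error for all characters $\chi$ on $H_1(\Sigma_2(J))$ lying outside a subspace whose dimension is bounded in terms of $g$. To evaluate these invariants on the satellite I would apply Litherland's formula \cite{Lith84}, which expresses $\tau(P_0(K),\chi)$ as a pattern contribution $\tau(P_0(U), \bar\chi)$ plus a companion contribution assembled from Tristram--Levine signatures of $K$ evaluated at roots of unity determined by the restriction of $\chi$ to the lifts of the companion in $\Sigma_2$. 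The plan is then to choose the companion $K$ from a suitable family (for instance connected sums of many knots with large, controlled signatures at the relevant roots of unity) for which these companion contributions are nonzero and large on a space of characters whose dimension grows without bound, defeating the genus bound for every fixed $g$ and yielding $d(P_0, Q_0) = \infty$.

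The main obstacle is the non-cancellation of the companion contributions. Because Tristram--Levine signatures are invariant under $\omega \mapsto \bar\omega$, a naive comparison suggests that the companion terms for winding number $+m$ and $-m$ coincide and cancel in $P_0(K) \# -Q_0(K)$---precisely the cancellation that annihilates the abelian signatures. The crux is to exploit the genuinely metabelian nature of the Casson--Gordon invariant: reversing the core orientation conjugates the homology classes carried by the companion lifts, so that for a carefully chosen character the two companion terms are evaluated at non-conjugate roots of unity and reinforce rather than cancel. Isolating this orientation-sensitive piece of Litherland's formula, and verifying that it persists on a space of characters of growing dimension as $K$ varies, is where the real work lies.
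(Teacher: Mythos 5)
Your reduction step is sound and matches the paper: $d$ is a pseudometric, so by Theorem~\ref{thm:ch} it suffices to exhibit a single pair $(P_0,Q_0)$ of winding numbers $(m,-m)$ with $d(P_0,Q_0)=\infty$, and taking $Q_0=P_0^r$ so that the target knot is $P_0(K)\# -(P_0(K))^r$ is exactly the paper's move (it takes $P_0=C_{m,1}$, the $(m,1)$-cable). You also correctly identify the crux: orientation-sensitivity must come from the metabelian level. But your execution has a fatal flaw: you run the Casson--Gordon/Gilmer obstruction on the \emph{double} branched cover $\Sigma_2$. On $H_1(\Sigma_2)$ the covering involution $t$ satisfies $t=t^{-1}$ (it acts by multiplication by $-1$), so by Proposition~\ref{prop:reversal} the entire package of data entering the obstruction --- homology, covering action, and Casson--Gordon signatures --- is identical for $-K^r$ and for $-K$. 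Consequently any $\Sigma_2$-based obstruction to $g_4(K\# -K^r)\le g$ would equally obstruct the genus of $K\# -K$, which is slice; so no such argument can give even $g_4(K\# -K^r)>0$. The ``conjugation of the companion lifts'' that your final paragraph relies on is literally trivial when $n=2$. This is precisely why Kim--Livingston worked with the $3$-fold cover and why the paper uses an odd prime $p$ with $p\nmid m$.

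Two further points. First, even after replacing $\Sigma_2$ by $\Sigma_p$ for odd $p$, your statement of the obstruction omits the ingredient that makes the argument run: in Theorem~\ref{thm:Gilmer} the subgroup $A_1\oplus B$ on which characters must vanish is \emph{covering transformation invariant}. Without that refinement (absent from Gilmer's original $n=2$ setting, where it is automatic) one cannot pass to a basis of eigenvectors of $H$ and build the character supported on a single eigenspace, as in the proof of Theorem~\ref{thm:exampleswork}. Second, the paper avoids your analysis of companion lifts in the cover of a general $P_0$ altogether: choosing $P_0=C_{m,1}$ and $p$ coprime to $m$, Theorem~\ref{thm: litherlandrelprime} gives a canonical invariant isomorphism under which the Casson--Gordon signatures of $C_{m,1}(K)$ at the $p$-fold cover \emph{equal} those of $K$ (Proposition~\ref{cor:cabling}), reducing the whole problem to the winding number $\pm 1$ case. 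What remains --- constructing $K=\#^{g+1}J_g$ by infecting along curves paired to eigenvalues $a_j$ and $a_j^{-1}=a_{p-j}$ with \emph{different} knots $A$ and $B$, so that the companion contributions reinforce rather than cancel --- is exactly the part your proposal defers as ``where the real work lies''; that deferred step is the main content of Theorem~\ref{thm:exampleswork}, so the proposal as it stands does not complete a proof.
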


\section{Acknowledgements}
I would like to thank Chuck Livingston for helpful email correspondence and my advisor Cameron Gordon for his encouragement and thoughtful advice. 

\section{Background}\label{section:background}

Given an oriented knot $K$, a choice of $n \in \N$, and a map $\chi: H_1(\Sigma_n(K)) \to \Z_d$ on the first homology of the $n$th cyclic branched cover of $K$,  Casson-Gordon associate a rational number $\sig_{1} \tau(K, \chi)$, which is roughly the twisted signature of some associated 4-manifold \cite{CG86}.  We have the following key proposition relating the Casson-Gordon signatures of a knot to those of its mirror image (i.e. the concordance inverse of its reverse), which follows immediately from the basic definitions.
\begin{prop}\label{prop:reversal}
Let $K$ be a knot, $-K^r$ denote its mirror image, and $n \in \N$. 
Then there is a canonical isomorphism of groups $\al: H_1(\Sig_n(K)) \to H_1(\Sig_n(-K^r))$ such that
\begin{enumerate}
\item Letting $t_{K}$ and $t_{-K^r}$ denote the actions induced by the natural covering transformations on $H_1(\Sig_n(K))$ and $H_1(\Sig_n(-K^r))$, respectively,  we have  
$t_{-K^r} \cdot \al(x) = \al(t_K^{-1} \cdot x) $ for all $x \in H_1(\Sig_n(K))$.
\item Given $\chi: H_1(\Sig_n(K)) \to \Z_m$ we have $\sig_{1}\tau(-K^r, \chi \circ \al^{-1}) = - \sig_{1} \tau(K, \chi)$. 
\end{enumerate}
\end{prop}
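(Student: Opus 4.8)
The plan is to realize $-K^r$ geometrically by a single reflection and to show that the whole statement is bookkeeping about how that reflection acts on the branched covers and on the $4$-manifolds defining the Casson--Gordon invariants. Since $-K^r$ is by definition the mirror image of $K$, there is an orientation-\emph{reversing} self-diffeomorphism $\rho \colon S^3 \to S^3$ carrying the oriented knot $K$ to the oriented knot $-K^r$. Branched covers are natural, so $\rho$ lifts to an orientation-reversing diffeomorphism $\tilde\rho \colon \Sig_n(K) \to \Sig_n(-K^r)$, and I would define $\al := \tilde\rho_*$ on first homology. This is canonical because $\rho$ is determined up to isotopy, and isotopic maps induce the same map on homology.

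For part~(1) the key point is that the covering transformation is pinned down by a choice of positively-oriented meridian, and that an orientation-reversing diffeomorphism reverses linking numbers. First I would record that $t_K$ is the deck transformation whose monodromy is hit by the meridian $\mu$ normalized by $\lk(\mu, K) = +1$. Since $\rho$ reverses all linking numbers, $\rho(\mu)$ is a meridian of $-K^r$ with $\lk(\rho(\mu), -K^r) = -1$, so $\rho$ sends the positive meridian of $K$ to (the inverse of) the positive meridian of $-K^r$. Naturality of the lift then gives $\tilde\rho \, t_K \, \tilde\rho^{-1} = t_{-K^r}^{-1}$; passing to homology with $\al = \tilde\rho_*$ and rearranging yields exactly $t_{-K^r}\cdot \al(x) = \al(t_K^{-1}\cdot x)$.

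For part~(2) I would unwind the definition of $\sig_1 \tau$: up to normalization it is the difference between a $\chi$-twisted signature and an ordinary signature of a $4$-manifold $W$ whose boundary is assembled from $\Sig_n(K)$ together with the data of $\chi$. Extending $\tilde\rho$ over such a bounding $4$-manifold identifies the $4$-manifold computing $\sig_1 \tau(-K^r, \chi \circ \al^{-1})$ with the one computing $\sig_1 \tau(K, \chi)$, but with reversed orientation, since $\tilde\rho$ is orientation-reversing; the character is transported precisely to $\chi \circ \al^{-1}$ because $\al = \tilde\rho_*$. Both the ordinary and the twisted signatures negate under orientation reversal, so their difference does too, giving $\sig_1 \tau(-K^r, \chi \circ \al^{-1}) = -\sig_1 \tau(K, \chi)$.

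All of the substance is in the orientation bookkeeping. The one place to be careful is the sign in part~(1): it is essential that $\rho$ is orientation-reversing, so that linking numbers flip and $t_K^{-1}$ rather than $t_K$ appears, and it is this same reversal that produces the overall minus sign in part~(2). I would therefore fix conventions for the meridian, the deck transformation, and the orientation of the bounding $4$-manifold at the outset, and then simply check that the normalization of Casson--Gordon's $\sig_1 \tau$ adopted here is the one under which orientation reversal of the ambient $3$-manifold negates the invariant.
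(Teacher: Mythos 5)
Your proof is correct and is essentially the argument the paper leaves implicit (the paper offers no details, asserting the proposition ``follows immediately from the basic definitions''): realize $-K^r$ by an orientation-reversing diffeomorphism $\rho$ of $S^3$, lift it to the branched covers, observe that reversal of linking numbers inverts the distinguished deck transformation (giving (1)), and that both the twisted and untwisted signatures of a bounding 4-manifold negate under orientation reversal (giving (2)). One small inaccuracy: $\rho$ is generally \emph{not} determined up to isotopy (knots can have symmetries), and in any case a lift $\tilde\rho$ is only unique up to composition with deck transformations, so $\alpha$ is canonical only up to the deck action---this is harmless, because property (1) is unaffected by replacing $\alpha$ with $t^k \circ \alpha$, and Casson--Gordon signatures are unchanged when the character is precomposed with a deck transformation, so property (2) survives as well.
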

Notice that if we replace $-K^r$ with $-K$, Part (1) of Proposition~\ref{prop:reversal} would be replaced with $t_{-K} \cdot \al(x) = \al(t_K \cdot x) $; since Casson-Gordon signatures are additive with respect to connected sums of knots, we would not be able to obtain any potential slice genus obstruction for $K \#-K$. This is reassuring, since $K \# -K$ is of course always slice. It also suggests to us that in order to obtain lower bounds for $g_4(K \# - K^r)$, we must pay particular attention to the action on the first homology induced by the covering transformation. We will use Gilmer's slice genus bound, in a slightly different form than originally stated. 
We use  $\sig_K(\om)$ to denote the Tristram-Levine signature of a knot $K$ at $\om \in S^1$ and for $n \in \mathbb{N}$ let $\omega_n:=e^{2\pi i/n}$. 

\begin{thm}[Gilmer \cite{Gil82}] \label{thm:Gilmer}
Let $K$ be a knot and suppose that $g_4(K) \leq g$.
Then for any prime power $n$ there is a decomposition of $H_1(\Sig_n(K))\cong A_1 \oplus A_2$ so that the following properties hold: 
\begin{enumerate}
\item  $A_1$ has a rank $2(n-1)g$ presentation with signature equal to $\sum_{i=1}^n \sig_K(\om_n^i)$. 
\item  $A_2$ has a subgroup $B$ such that $|B|^2=|A_2|$, and for any prime power order character $\chi:H_1(\Sig_n(K)) \to \Z_d$ which vanishes on $A_1 \oplus B$, we have   
\[| \sig_{1} \tau(K, \chi) + \sum_{i=1}^n \sig_K(\om_n^i) | \leq 2ng.
\]
Also, $A_1 \oplus B$ and $B$ are both covering transformation invariant. 
\end{enumerate}
\end{thm}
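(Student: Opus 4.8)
The plan is to realize the genus bound by an embedded surface in the four-ball, pass to the associated cyclic branched cover, and read off both the decomposition of $H_1(\Sig_n(K))$ and the signature estimate from the algebraic topology of this cover. First I would use $g_4(K)\le g$ to choose a properly embedded, connected, oriented surface $F\subset B^4$ of genus $g$ with $\partial F=K\subset S^3$, and let $W$ be the $n$-fold cyclic cover of $B^4$ branched along $F$, so that $\partial W=\Sig_n(K)$. Because $n$ is a prime power, $\Sig_n(K)$ is a rational homology sphere, hence $H_*(W;\mathbb{Q})$ is supported in degrees $0$ and $2$; feeding the Euler characteristic identity $e(W)=n\,e(B^4)-(n-1)e(F)$ into this vanishing gives $\rank H_2(W)=2(n-1)g$. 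The intersection form on $H_2(W)$ is then a symmetric form of this rank, and the $G$-signature theorem for the deck $\Z_n$-action (equivalently the Casson--Gordon--Viro signature formula, using that $F$ carries trivial relative normal data) identifies its signature as $\sigma(W)=\sum_{i=1}^{n}\sig_K(\om_n^i)$.

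Next I would extract the decomposition $H_1(\Sig_n(K))\cong A_1\oplus A_2$ from the long exact sequence of $(W,\Sig_n(K))$. Lefschetz duality identifies $H_2(W,\Sig_n(K))$ with $H^2(W)$, so the adjoint of the intersection form presents the image of $H_2(W,\Sig_n(K))\to H_1(\Sig_n(K))$; taking $A_1$ to be this image yields a summand with a rank $2(n-1)g$ presentation whose signature is $\sigma(W)=\sum_{i=1}^{n}\sig_K(\om_n^i)$, which is part (1). The complementary summand $A_2$ is the part of $H_1(\Sig_n(K))$ mapping injectively into $H_1(W)$, and the half-lives-half-dies principle for the linking form on $\partial W$ produces a self-annihilating subgroup $B\le A_2$ with $|B|^2=|A_2|$. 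Since the covering transformation $t_K$ of $\Sig_n(K)$ is the restriction of the deck transformation of $W$, every construction here is $\Z_n$-equivariant, so $A_1\oplus B$ and $B$ are covering-transformation invariant.

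For part (2) I would argue that a prime-power-order character $\chi\colon H_1(\Sig_n(K))\to\Z_d$ vanishing on $A_1\oplus B$ extends over $W$: killing the metabolizer $B$ and the intersection-form part $A_1$ is exactly the input needed to lift $\chi$ through $H_1(\Sig_n(K))\to H_1(W)$, and the prime-power hypotheses on $n$ and $d$ are what make the relevant Casson--Gordon extension lemma apply. Once $\chi$ extends, $\sig_1\tau(K,\chi)$ is, in the standard sign convention, the twisted signature defect $\sigma^{\chi}(W)-\sigma(W)$, where $\sigma^{\chi}(W)$ is the signature of the $\mathbb{C}_\chi$-twisted intersection form. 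Combining this with $\sigma(W)=\sum_{i=1}^{n}\sig_K(\om_n^i)$ collapses the left-hand side of the desired inequality to a single twisted signature,
\[
\sig_1\tau(K,\chi)+\sum_{i=1}^{n}\sig_K(\om_n^i)=\sigma^{\chi}(W),
\]
and $|\sigma^{\chi}(W)|\le \dim_{\mathbb{C}}H_2(W;\mathbb{C}_\chi)$. Bounding this twisted second Betti number via the Euler characteristic identity $e(W)=1+2(n-1)g$ (which is insensitive to the rank-one twist) together with duality control of $\dim_{\mathbb{C}}H_1(W;\mathbb{C}_\chi)$ by the genus then yields $\dim_{\mathbb{C}}H_2(W;\mathbb{C}_\chi)\le 2ng$, completing the estimate.

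I expect the third paragraph to be the main obstacle. The Betti-number count and the $G$-signature identification of $\sigma(W)$ are essentially bookkeeping, but two points require genuine care: first, verifying that a character killing $A_1\oplus B$ truly extends across the branched cover, which is where the prime-power order is indispensable and where Gilmer's precise handling of the linking form and its metabolizer $B$ enters; and second, estimating $\dim_{\mathbb{C}}H_2(W;\mathbb{C}_\chi)$ sharply enough to land on the clean constant $2ng=2(n-1)g+2g$ rather than a weaker multiple of the genus.
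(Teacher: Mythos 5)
The paper does not reprove Gilmer's theorem at all: its proof is a citation of Gilmer's original argument, and the only new content is the covering-transformation invariance, which follows because in Gilmer's proof the relevant subgroups are images of equivariant maps --- writing $W_n$ for the $n$-fold branched cover of $B^4$ over the genus-$g$ surface, one has $A_1 \oplus B = \im\left(\partial\colon H_2(W_n,\Sig_n) \to H_1(\Sig_n)\right)$ and $B = \im\left(\partial|_{TH_2(W_n,\Sig_n)}\right)$. Your proposal instead attempts to reconstruct Gilmer's proof from scratch, and there it goes wrong structurally: you take $A_1$ to be all of $\im(\partial)$ and then manufacture $B$ inside a complement of $\im(\partial)$ by a linking-form (``half-lives-half-dies'') argument. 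In Gilmer's decomposition $B$ sits \emph{inside} $\im(\partial)$, not in a complement: $H_2(W_n,\Sig_n)\cong H^2(W_n)$ has torsion subgroup isomorphic to $TH_1(W_n)$, and $B$ is precisely the image of that torsion under $\partial$. Consequently your part (1) fails: $\im(\partial)$ is the cokernel of $H_2(W_n)\to H_2(W_n,\Sig_n)$, which contains the image of this torsion, so in general it is strictly larger than anything presented by the rank-$2(n-1)g$ intersection form; only after splitting off $B$ does the remaining summand $A_1$ admit such a presentation. Relatedly, the duality arguments you invoke produce a metabolizer-type subgroup lying in the kernel of $H_1(\Sig_n)\to H_1(W_n)$, which is $\im(\partial)$ itself --- i.e., inside what you have already called $A_1$ --- so your claim $|B|^2=|A_2|$ for a $B$ living in a complement of $\im(\partial)$ has no justification. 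Note also that the condition ``$\chi$ vanishes on $A_1\oplus B$'' is, in Gilmer's setup, exactly the condition that $\chi$ kills $\im(\partial)$, i.e.\ that $\chi$ extends over $W_n$; with your decomposition it becomes a strictly stronger and unmotivated condition.

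Second, the point your proposal treats as automatic is the one the paper's proof exists to make. You assert that ``every construction here is $\Z_n$-equivariant,'' but a choice of complement $A_2$ or of a metabolizer of a linking form is not canonical, so its invariance under the deck transformation is not free; this is exactly why the paper identifies $A_1\oplus B$ and $B$ as images of the equivariant map $\partial$ (and its restriction to torsion), which makes invariance automatic with no choices --- the paper even remarks that Gilmer never needed this, since for $n=2$ the deck transformation acts by $-1$ and every subgroup is invariant. Finally, your third paragraph --- extending the character and proving the sharp bound $\left|\sig_1\tau(K,\chi)+\sum_{i=1}^n\sig_K(\om_n^i)\right|\le 2ng$ --- is the analytic core of Gilmer's theorem, and you leave it as an acknowledged ``obstacle'' rather than proving it: your twisted Euler characteristic count gives $\dim H_2(W_n;\mathbb{C}_\chi)=1+2(n-1)g+\dim H_1(W_n;\mathbb{C}_\chi)+\dim H_3(W_n;\mathbb{C}_\chi)$, and bounding the last two terms by $2g-1$ is precisely where Gilmer's work lies. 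Citing Gilmer for these steps, as the paper does, would be perfectly acceptable; as a self-contained reproof, however, the proposal has both an incorrect decomposition and an unproven central estimate.
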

\begin{proof}
This follows from Gilmer's proof. Letting $W_n$ denote the $n$-fold cyclic branched cover of the 4-ball over the hypothesized genus $g$ surface with boundary $K$ and abbreviating $\Sig_n=\Sig_n(K)$,  we obtain $A_1$ and $B$ from the following exact sequence:
\[ 0 \to H_2(W_n) \to H_2(W_n, \Sig_n) \xrightarrow{\partial} H_1(\Sig_n) \to H_1(W_n) \to H_1(W_n, \Sig_n) \to 0.
\]
In particular, $A_1 \oplus B= \im(\partial)$ and $B= \im\left(\partial | _{TH_2(W_n, \Sig_n)}\right)$ are covering transformation invariant. 
\end{proof}
Note that Gilmer's original proof did not include any consideration of covering transformation invariance, due perhaps to the fact that his work explicitly dealt with the case $n=2$, when $t$  acts by multiplication by $-1$ and all subgroups are covering transformation invariant. Kim and Livingston's \cite{KimLivingston:2005}  proof that there exist knots $K$ for which $g_4(K \#-K^r)$ is arbitrarily large relies on this more general result in the case $n=3$. \\

Our examples are constructed via  various satellite operations. Given the importance of orientation in our context, we rather pedantically establish some orientation conventions pertaining to patterns in solid tori. 
Choose fixed orientations on $S^1$ and $D^2$. These induce  orientations on $V:= S^1 \times D^2$ and $\lambda_V:= S^1 \times \{x_0\}$, where $x_0$ is a (positively oriented) point in $\partial D^2$, as well as on $\mu_V:= \{y_0\} \times \partial D^2$.  These orientations for $V$,  $\lambda_V$, and $\mu_V$ will remain fixed throughout. Given a pattern $P:S^1 \to V$, the class of $P(S^1)$ in $H_1(V)$ is equal to $n[\lambda_V]$ for some $n \in \Z$. We call $n$ the \emph{(algebraic) winding number} of $P$.
To an oriented knot $K$ in $S^3$ we associate the positively oriented meridian $\mu_K$ and 0-framed longitude $\lambda_K$  in the standard way. Finally, note that as usual we mildly abuse notation by, for example,  referring to both the map $P$ and its image $P(S^1)$ as $P$. 

\begin{defn}\label{defn:pattern}
Given a knot $K$ in $S^3$ and a pattern $P:S^1 \to V$, define  the \emph{satellite knot} $P(K)$ as follows:
Let $i_K: V \to \overline{\nu(K)}\subset S^3$ be a homeomorphism with $i_K(\lambda_V)= \lambda_K$  and $i_K(\mu_V)= \mu_K$.
Then  $P(K):= i_K \circ P: S^1 \to S^3$. 
\end{defn}

\begin{figure}[h!]
\begin{minipage}{6.25in}
  \centering
  \includegraphics[align=c,height=3cm]{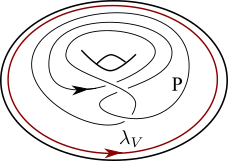}
  \hspace*{.2in}
  \includegraphics[align=c,height=3cm]{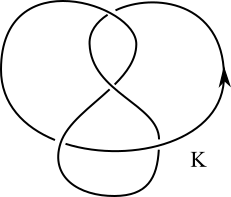}
  \hspace*{.2in}
  \includegraphics[align=c,height=4.5cm]{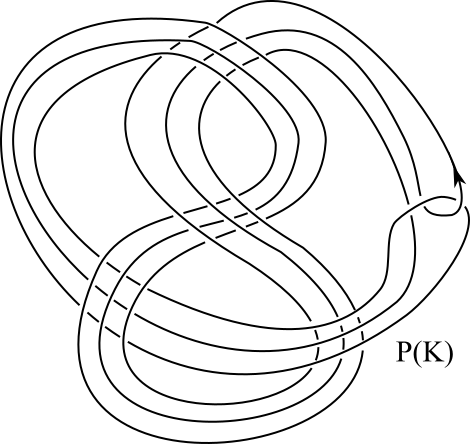}
\end{minipage}
\caption{A winding number +1 pattern $P$ in the solid torus $V$ with longitude $\lambda_V$ in red (left), a knot $K$ (center), and the satellite knot $P(K)$ (right).}
\label{orientedsatellite}
\end{figure}

Given a pattern $P:S^1 \to V$ of winding number $n$, we obtain a winding number $(-n)$ pattern $P^r$ by reversing the orientation of $S^1$ while fixing the orientations of $V$, $\lambda_V$, and $\mu_V$. Observe that $P^r(K)= (P(K))^r$, whereas $P(K^r)$ generally equals neither $P(K)$ nor $P^r(K)$. Our need for this plethora of orientations on $P$, $\lambda_V$, and $K$ in order to obtain a well-defined knot $P(K)$  is evident even in the simplest case: connected sum is not a well-defined operation on unoriented knots.

The work of Litherland~\cite{Lith84} completely describes the Casson-Gordon invariants of a satellite knot; we will only need the following special cases. 

\begin{thm}[Litherland \cite{Lith84}]\label{thm:Litherland0}
Let $P$ be a satellite operator, described via a curve $\gamma$ in the complement of $P(U)$ in $S^3$. 
Let $n \in \N$ be a prime power, and suppose that $\gamma$ has $n$ distinct lifts  $\gamma_1, \dots, \gamma_n$  to $\Sig_n(P(U))$.  
Then for any knot $K$ there is a canonical, covering transformation invariant isomorphism 
$\phi: H_1(\Sig_n(P(U))) \to  H_1(\Sig_n(P(K)))$ such that
for any prime power order character $\chi: H_1(\Sig_n(P(U))\to \mathbb{Z}_d$ we have 
\[
\sig_{1} \tau(P(K), \chi \circ \phi^{-1})= \sig_{1}\tau(P(U), \chi) + \sum_{i=1}^n \sig_K\left(\om_d^{\chi(\gamma_i)}\right).
\]
\end{thm}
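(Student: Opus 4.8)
The plan is to exploit the standard torus decomposition of the satellite exterior, transport it to the branched covers, and then split the twisted signature computing $\sig_1\tau$ along this decomposition via Novikov additivity. Write $W := V \setminus \nu(P)$ for the exterior of the pattern inside the solid torus and $E(J) := S^3 \setminus \nu(J)$ for the exterior of a knot $J$. By Definition~\ref{defn:pattern} the exterior of $P(J)$ is the union of $W$ and $E(J)$ glued along $\partial V \cong \partial\nu(J)$, matching $\lambda_V$ with $\lambda_J$ and $\mu_V$ with $\mu_J$; taking $J = U$ recovers the exterior of $P(U)$, in which $E(U)$ is the companion solid torus with core $\gamma$.

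First I would pass to the $n$-fold cyclic branched covers. Since $P$ has winding number $m$, the companion meridian satisfies $\lk(\mu_J, P(J)) = m$ and the core satisfies $\lk(\gamma, P(U)) = m$, so $\gamma$ has $\gcd(m,n)$ lifts; the hypothesis of $n$ distinct lifts is exactly the condition $n \mid m$, under which the companion piece is covered trivially. Hence $\Sig_n(P(U))$ is the union of the branched cover $\widetilde W$ of the pattern exterior with $n$ disjoint solid tori having cores $\gamma_1,\dots,\gamma_n$, while $\Sig_n(P(K))$ is the union of the \emph{same} $\widetilde W$ with $n$ disjoint copies $E(K)_1,\dots,E(K)_n$ of the companion exterior, glued along the same $n$ boundary tori. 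On each gluing torus both a solid torus and a knot exterior kill $\lambda_V$ and send $\mu_V$ to a generator of $H_1$, so the two Mayer–Vietoris sequences coincide and produce the canonical isomorphism $\phi\colon H_1(\Sig_n(P(U))) \to H_1(\Sig_n(P(K)))$; it is covering-transformation invariant because the cut-and-paste is carried out equivariantly and simultaneously on all $n$ pieces. Tracking generators shows $\phi([\gamma_i]) = [\mu_{K,i}]$, so $(\chi\circ\phi^{-1})(\mu_{K,i}) = \chi(\gamma_i)$.

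Next I would build a $4$-manifold computing $\sig_1\tau(P(K),\chi\circ\phi^{-1})$ from one computing $\sig_1\tau(P(U),\chi)$. Starting from a $4$-manifold $X$ over which a multiple of $(\Sig_n(P(U)),\chi)$ bounds together with an extension of the $\Z_d$-character, I would attach, along equivariant neighborhoods of the lifts $\gamma_1,\dots,\gamma_n$ in $\partial X$, copies of a $4$-dimensional cobordism built from the companion $K$ realizing the coefficient system determined by $\chi(\gamma_i)$, so that the new boundary becomes the corresponding multiple of $(\Sig_n(P(K)),\chi\circ\phi^{-1})$. Since the modification is supported away from the pattern part and the induced characters agree across the gluing hypersurfaces, Novikov additivity splits the $\om_d$-twisted signature of the result as the twisted signature of the pattern part, equal to $\sig_1\tau(P(U),\chi)$, plus a sum of $n$ local contributions, one per $\gamma_i$.

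The heart of the argument, and the step I expect to be hardest, is to identify the $i$-th local contribution with the Tristram–Levine signature $\sig_K(\om_d^{\chi(\gamma_i)})$. This amounts to showing that the $\om_d$-twisted signature defect produced by replacing a solid-torus piece carrying character value $a_i := \chi(\gamma_i) \in \Z_d$ by the companion cobordism equals the signature of $(1-\om_d^{a_i})V + (1-\overline{\om_d^{a_i}})V^T$ for a Seifert matrix $V$ of $K$, i.e. $\sig_K(\om_d^{a_i})$; this is precisely the computation that recognizes an equivariant signature of the companion's cover as a Tristram–Levine signature. The remaining care lies in matching the framing and orientation conventions so that no spurious correction terms survive, and in checking that $\chi\circ\phi^{-1}$ restricts on each companion piece exactly to the value $a_i$ dictated by the bookkeeping above, so that the correct root of unity $\om_d^{a_i}$ appears in each summand.
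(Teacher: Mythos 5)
First, a point about the comparison itself: the paper contains no proof of this statement --- it is imported verbatim from Litherland \cite{Lith84} --- so your attempt can only be measured against Litherland's argument and the now-standard cut-and-paste proof, which is indeed the strategy you outline. Your geometric reduction is correct: the hypothesis of $n$ distinct lifts is equivalent to $n$ dividing the winding number, this forces the companion piece to be covered trivially, so $\Sigma_n(P(K))$ is obtained from $\Sigma_n(P(U))$ by replacing the $n$ solid tori $\nu(\gamma_1),\dots,\nu(\gamma_n)$ by copies of $E(K)$ (glued with meridian and longitude exchanged), and the Mayer--Vietoris comparison yields the canonical equivariant isomorphism $\phi$ with $\phi([\gamma_i])=[\mu_{K,i}]$, so that $\chi\circ\phi^{-1}$ assigns the value $\chi(\gamma_i)$ to the $i$-th meridian. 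All of this is right, and it is the easy half of the theorem.

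Two things keep the proposal from being a proof. The lesser one: what you invoke as Novikov additivity is not Novikov additivity. You attach the companion pieces along solid tori that are proper submanifolds of boundary components, not along closed boundary components, and in that situation (twisted) signatures are not additive in general; you need Wall's non-additivity theorem together with a verification that the correction term --- a signature built from three Lagrangian subspaces of $H_1$ of the corner torus --- vanishes. It does vanish here, essentially because under the infection gluing $\lambda_K$ is identified with the meridian of $\gamma_i$, so the rational Lagrangian coming from the solid torus coincides with the one coming from $E(K)$; but this must be stated, or the construction must be repackaged as a gluing along closed $3$-manifolds. The serious one: the identification of each local contribution with $\sigma_K\bigl(\omega_d^{\chi(\gamma_i)}\bigr)$, which you explicitly defer as ``the hardest step,'' is not a detail --- it is where the entire content of the theorem lives, and nothing in your proposal proves it. It is precisely Casson--Gordon's Lemma 3.1 in \cite{CG86}: for the $4$-manifold obtained by attaching a $0$-framed $2$-handle along $K$, with $\Z_d$-coefficient system sending the meridian to $a$, the twisted signature defect equals $\sigma_K(\omega_d^a)$, the potential correction term vanishing because the relevant surface has self-intersection zero. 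Citing that lemma (or reproving it via the $G$-signature theorem) closes the argument; without it, your proposal constructs $\phi$ and reduces the formula to a local statement but does not establish the formula.
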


\begin{thm}[Litherland \cite{Lith84}]\label{thm: litherlandrelprime} 
Let $P$ be a winding number $m$ satellite operator with $P(U)=U$ and  suppose $n \in \N$  is a prime power such that $(m,n)=1$. 
Then for any knot $K$ there is a canonical, covering transformation invariant isomorphism $\phi:  H_1(\Sig_n(K)) \to H_1(\Sig_n(P(K)))$
such that for any prime power order character $\chi: H_1(\Sig_n(K)) \to \mathbb{Z}_d$  we have
\[\sig_1\tau(P(K), \chi \circ \phi^{-1} )= \sig_1 \tau(K, \chi).\]
\end{thm}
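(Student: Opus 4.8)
The plan is to carry out Litherland's branched-cover decomposition, using the relatively prime hypothesis to identify the companion's contribution with a single long lift of its exterior. Write $S^3 = i_K(V)\cup E(K)$ with $E(K):=S^3\setminus\nu(K)$, so that the branched cover $\Sig_n(P(K))$ decomposes along the preimage of $\partial\nu(K)$ into a piece $\hat V$ lying over the pattern solid torus and a piece lying over $E(K)$. Since $\mu_K=i_K(\mu_V)$ bounds a meridian disk meeting $P(K)$ algebraically $m$ times, the restriction of the defining map $\pi_1(S^3\setminus P(K))\to\Z_n$ to $E(K)$ sends $\mu_K\mapsto m$ and $\lambda_K\mapsto 0$. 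Because $(m,n)=1$, $m$ is a unit, so this restriction has the same kernel as the map $\mu_K\mapsto 1$ defining $\Sig_n(K)$; hence the companion contributes exactly the connected $n$-fold cyclic cover $\tilde E(K)$ that appears in $\Sig_n(K)$, a single $n$-fold lift rather than the $n$ disjoint copies of Theorem~\ref{thm:Litherland0}.

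First I would construct $\phi$. The piece $\hat V$ is the $n$-fold branched cover of $(V,P(U))$; since $P(U)=U$ we have $\Sig_n(P(U))=S^3=\hat V\cup\tilde W$ with $\tilde W=\nu(\tilde\gamma)$ the solid-torus neighborhood of the single lift $\tilde\gamma$ of the companion axis, so $\hat V = E(\tilde\gamma)$. Both $\Sig_n(K)$ and $\Sig_n(P(K))$ are then obtained by gluing $\tilde E(K)$ to a complementary piece (the branched cover $\hat V_0$ of $\nu(K)$ over its core, respectively $\hat V$), and a Mayer--Vietoris computation shows that in each case the gluing absorbs the $H_1$ of the complementary piece and kills precisely the class of the lifted meridian $\widehat{\mu_K}$. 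This gives
\[
H_1(\Sig_n(K))\;\cong\;H_1(\tilde E(K))/\langle\widehat{\mu_K}\rangle\;\cong\;H_1(\Sig_n(P(K))),
\]
and I would take $\phi$ to be the induced isomorphism coming from the common inclusion of $\tilde E(K)$. Covering-transformation invariance is read off from this description, once one notes that the two deck generators agree after the reindexing $t_{P(K)}\mapsto t_{P(K)}^{m}$ forced by $\mu_K\mapsto m$.

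With $\phi$ fixed, the signature identity becomes a four-dimensional computation. I would build $W$ with $\partial W=\Sig_n(P(K))$ by gluing, along the cover of the satellite torus, the $n$-fold cyclic branched cover of $(B^4,D)$ over a slice disk $D$ for the unknot $P(U)=U$ to the $n$-fold branched cover $W_K$ of $(B^4,F)$ over a pushed-in Seifert surface $F$ for $K$. The character $\chi\circ\phi^{-1}$ extends over the associated $\Z_d$-cover of $W$ because $\chi$ does over $W_K$ and the pattern piece carries no homology---this is exactly where $P(U)=U$ is used. The definition of $\sig_1\tau$ together with Novikov additivity then splits the invariant of $P(K)$ as the pattern contribution $\sig_1\tau(P(U),\chi)=\sig_1\tau(U,\cdot)=0$ plus the companion contribution $\sig_1\tau(K,\chi)$, yielding the claimed equality.

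The hard part will be the four-dimensional additivity: one must check that the twisted signatures really add across the cover of the satellite torus, i.e. that the Wall non-additivity correction vanishes. Here the relatively prime hypothesis does its work a second time, since $(m,n)=1$ makes the cover of the gluing torus connected and renders the lifted longitude $\widehat{\lambda_K}=\partial\tilde F$ null-homologous in $\tilde E(K)$, so the Maslov-index correction is trivial. The remaining points---that $\phi$ is canonical and equivariant in the sense needed for Proposition~\ref{prop:reversal}, and that $\chi\circ\phi^{-1}$ has the correct prime-power order---are routine but should be verified against the conventions of Theorem~\ref{thm:Gilmer}.
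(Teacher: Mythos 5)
First, a caveat on the comparison itself: the paper gives no proof of this statement. Theorem~\ref{thm: litherlandrelprime} is quoted from Litherland \cite{Lith84} and used as a black box, so your proposal can only be measured against Litherland's argument and against facts the paper does prove elsewhere.

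Your three-dimensional analysis is correct and is the heart of the matter. Since $m$ is a unit mod $n$, the defining character restricted to $E(K)$ has the same kernel as $\mu_K\mapsto 1$, so the companion side of $\Sig_n(P(K))$ is the connected $n$-fold cover $\tilde E(K)$; since $(m,n)=1$ the axis has connected preimage $\tilde\gamma$ in $\Sig_n(P(U))=S^3$, so the pattern side is $E(\tilde\gamma)$; Mayer--Vietoris identifies both $H_1(\Sig_n(K))$ and $H_1(\Sig_n(P(K)))$ with $H_1(\tilde E(K))/\langle\widehat{\mu_K}\rangle$ (on the satellite side one needs $[\widehat{\mu_K}]=0$ in $H_1(E(\tilde\gamma))$, which holds because the preimage of the meridian disk of $V$ is a $2$-chain bounded by $\widehat{\mu_K}$ and disjoint from $\tilde\gamma$); and the deck actions correspond under the reindexing $t_K=t_{P(K)}^{m}$, which is exactly the form of invariance the paper needs in Proposition~\ref{cor:cabling}. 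Your identification of Wall non-additivity as the danger point, and your reason for its vanishing (a lift of $\lambda_K$ bounds a lifted Seifert surface in $\tilde E(K)$, so two of the three Lagrangians at the corner coincide), are also sound.

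The genuine gap is in the four-dimensional step. You take $W_K$ to be the $n$-fold branched cover of $(B^4,F)$ over a pushed-in Seifert surface $F$ and assert that $\chi$ extends over it. For a general prime-power character this is false, and the paper's own proof of Theorem~\ref{thm:Gilmer} shows why: from the exact sequence there, a character on $H_1(\Sig_n(K))$ that extends over $W_n$ must vanish on $\im(\partial)=A_1\oplus B$, which is typically a large proper subgroup --- this is precisely why Gilmer's theorem carries the hypothesis that $\chi$ vanish on $A_1\oplus B$, and why Casson--Gordon signatures can obstruct sliceness at all. A Casson--Gordon invariant is not the signature defect of an arbitrary $4$-manifold bounded by the branched cover; the definition requires a $4$-manifold over which (a multiple of) the character extends, and the Seifert-surface cover is generally not one. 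Since the theorem must hold for \emph{every} prime-power $\chi$, your $W$ does not compute $\sig_1\tau(P(K),\chi\circ\phi^{-1})$ outside a restricted class of characters. The standard repair, and in essence what Litherland does, is to take $W_K$ from bordism theory: some multiple of $(\Sig_n(K),\chi)$ bounds a $4$-manifold with the character extended, by finiteness of the relevant bordism group, and your splicing argument can be run on that $W_K$. A secondary, fixable slip: you cannot glue the two $4$-manifolds literally along the covering torus, nor along the solid tori $\tilde W$ and $\hat V_0$ (the meridian of one matches a longitude of the other); gluing along a thickened torus $\hat T\times I$ produces boundary $\Sig_n(P(K))\sqcup S^3$, where the $S^3$ is $\tilde W\cup\hat V_0$, and this extra component must be capped off with a $B^4$.
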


\section{Winding number $m$ and $-m$ patterns are  unbounded distance in their action on concordance.}

Let $C_{m,1}$ denote the $(m,1)$ cabling pattern and $C_{m,1}^r$ denote the winding number $-m$ pattern obtained by reversing $C_{m,1}$. 

\begin{prop}\label{cor:cabling}
Suppose $K$ is a knot such that $n$-fold branched cover Casson-Gordon signature obstructions show that 
$g_4(K \# -K^r)>g$. 
Then for any $m$ which is relatively prime to $n$ we have that
$g_4(C_{m,1}(K) \# -C_{m,1}^r(K))>g$ too. 
\end{prop}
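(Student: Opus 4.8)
The plan is to transfer the Casson-Gordon obstruction for $K \# -K^r$ across the cabling operation, exploiting the fact that for $(m,n)=1$ the $(m,1)$-cable induces an isomorphism on branched-cover homology that respects both the covering transformation and the Casson-Gordon signatures. First I would record two elementary facts: $C_{m,1}^r(K) = (C_{m,1}(K))^r$ (the special case of the identity $P^r(K)=(P(K))^r$ noted above) and $C_{m,1}(U)=U$ (since $T(m,1)$ is unknotted). Writing $J := C_{m,1}(K)$, the knot in question is thus $J \# -J^r$, and since $C_{m,1}$ is a winding number $m$ operator with $C_{m,1}(U)=U$ and $(m,n)=1$, Theorem~\ref{thm: litherlandrelprime} applies (with $n$ the prime power underlying the hypothesized obstruction).

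The heart of the argument is to construct a single isomorphism
\[
\Phi: H_1(\Sig_n(K)) \oplus H_1(\Sig_n(-K^r)) \longrightarrow H_1(\Sig_n(J)) \oplus H_1(\Sig_n(-J^r))
\]
that is equivariant for the covering transformations and preserves Casson-Gordon signatures, meaning $\sig_1\tau(J \# -J^r, \chi \circ \Phi^{-1}) = \sig_1\tau(K \# -K^r, \chi)$ for every prime power order character $\chi$. On the first summand I take $\phi$ from Theorem~\ref{thm: litherlandrelprime}, which is already covering transformation invariant and signature preserving. On the second summand, rather than realizing $-J^r$ as a cable of $-K^r$ (which forces awkward mirror-reverse bookkeeping on the pattern), I would set $\phi' := \al_J \circ \phi \circ \al_K^{-1}$, where $\al_K$ and $\al_J$ are the canonical isomorphisms of Proposition~\ref{prop:reversal} for $K$ and for $J$. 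Chaining part (1) of Proposition~\ref{prop:reversal} (which inverts the covering transformation) on both ends with the equivariance of $\phi$ gives $\phi' t_{-K^r} = t_{-J^r}\phi'$, since the two orientation-reversal twists $t \mapsto t^{-1}$ cancel against $\phi$'s equivariance; and chaining part (2) of Proposition~\ref{prop:reversal} with the signature identity of Theorem~\ref{thm: litherlandrelprime} gives $\sig_1\tau(-J^r, \chi_2 \circ (\phi')^{-1}) = \sig_1\tau(-K^r, \chi_2)$. Setting $\Phi = \phi \oplus \phi'$ and using additivity of Casson-Gordon signatures under connected sum then yields the desired equivariant, signature-preserving isomorphism.

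With $\Phi$ in hand the transfer is formal. Both $K \# -K^r$ and $J \# -J^r$ have vanishing total Tristram-Levine signature (as $\sig_{-L^r}(\om) = -\sig_L(\om)$), so the correction terms $\sum_{i=1}^n \sig(\om_n^i)$ in Theorem~\ref{thm:Gilmer} vanish for both and the Gilmer bound reduces to $|\sig_1\tau| \leq 2ng$ in each case. Assuming $g_4(J \# -J^r) \leq g$, Theorem~\ref{thm:Gilmer} yields a decomposition $H_1(\Sig_n(J \# -J^r)) \cong A_1' \oplus A_2'$ with covering transformation invariant subgroups $A_1' \oplus B'$ and $B'$; pulling these back through the equivariant isomorphism $\Phi$ produces subgroups of $H_1(\Sig_n(K \# -K^r))$ satisfying the identical algebraic hypotheses (same orders, $|B|^2=|A_2|$, covering transformation invariance, and a rank $2(n-1)g$ presentation of $A_1$ of signature $0$). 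The hypothesis that the branched-cover obstruction detects $g_4(K \# -K^r) > g$ means precisely that every such algebraically admissible decomposition carries a character $\chi_0$ vanishing on $A_1 \oplus B$ with $|\sig_1\tau(K \# -K^r, \chi_0)| > 2ng$; transporting $\chi_0$ forward by $\Phi$ gives a character vanishing on $A_1' \oplus B'$ with the same forbidden signature value, contradicting Gilmer's bound for $J \# -J^r$. Hence $g_4(J \# -J^r) = g_4(C_{m,1}(K) \# -C_{m,1}^r(K)) > g$.

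The main obstacle I anticipate is not any single computation but the final bookkeeping step: one must check that the obstruction for $K \# -K^r$ depends only on the triple (branched-cover homology, covering transformation action, Casson-Gordon signature function), data that $\Phi$ preserves, and not on the geometric origin of the Gilmer decomposition. Concretely this requires confirming that the obstruction is quantified over all algebraically admissible decompositions — as it must be, since the hypothetical genus $g$ surface is unknown — so that a decomposition pulled back through $\Phi$ is a legitimate input. The construction of $\phi'$ from Proposition~\ref{prop:reversal}, together with the symmetric appearance of the reversal twist $t \mapsto t^{-1}$ on both sides, is exactly what makes the transfer go through.
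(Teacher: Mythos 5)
Your proposal is correct and follows essentially the same route as the paper: the paper's (much terser) proof likewise invokes Theorem~\ref{thm: litherlandrelprime} to get a canonical, covering transformation invariant, signature-preserving correspondence between the Casson-Gordon data of $K$ and of $C_{m,1}(K)$, concludes that the same branched-cover obstruction rules out $g_4\left(C_{m,1}(K) \# -(C_{m,1}(K))^r\right) \leq g$, and finishes with the identity $-(C_{m,1}(K))^r = -C_{m,1}^r(K)$. Your construction of $\phi' = \al_J \circ \phi \circ \al_K^{-1}$ on the reversed summand and the explicit pull-back of Gilmer decompositions simply make precise the bookkeeping that the paper leaves implicit in the phrase ``the obstructions show.''
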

\begin{proof}
First, observe that by Theorem~\ref{thm: litherlandrelprime} we have a canonical, covering transformation invariant correspondence between the Casson-Gordon signatures of $K$ corresponding to the $n$-fold branched cover and those of $C_{m,1}(K)$. 
So the $n$-fold branched cover Casson-Gordon signature obstructions show that 
$g_4(C_{m,1}(K) \# -(C_{m,1}(K))^r)>g$. 
But $-(C_{m,1}(K))^r= -(C_{m,1}^r(K)). $
\end{proof}

We will show in Theorem~\ref{thm:exampleswork} that
for any odd prime $p$ and any $g \in \N$, there exists a knot $K$ such that $g_4(K \#-K^r) >g$ as detected by $p$-fold cyclic branched cover Casson-Gordon signatures. Once we have this result, Theorem~\ref{theorem:main} follows. 

\begin{proof}[Proof of Theorem~\ref{theorem:main}]
Fix $m>0$. Let $P$  and $Q$ be arbitrary patterns of winding number $m$ and $-m$, respectively.
Observe that Theorem~\ref{thm:ch} implies that $d(C_{m,1}, P)$ and $d(C_{m,1}^r, Q)$ are both finite, so it suffices to show that $d(C_{m,1}, C_{m,1}^r)$ is infinite. 
Let $g\geq 0$ be given, and let $p$ be an odd prime which does not divide $m$. By Theorem~\ref{thm:exampleswork}, there exists a knot $K$ such that the $p$th cyclic branched cover Casson-Gordon signatures show that $g_4(K \#-K^r)>g$. By Proposition~\ref{cor:cabling}, we therefore have that $g_4(C_{m,1}(K) \# - C_{m,1}^r(K))>g$, too. 
\end{proof}

For a fixed $p$ and $g$, we will take $K= \#^{g+1} J_g$, where  $J_g$ is obtained by iterated satellite operations along a $(p-1)$-component unlink $\{\eta_j\}_{j=1}^{p-1}$ in the complement of some knot $J_0$. The key property of $J_0$ will be that  for some prime $q$ and distinct $a_1, \dots, a_{p-1} \in \F_q$,
\[H_1(\Sig_p(J_0), \F_q) \cong \F_q[t]/ \langle \Phi_p(t) \rangle \cong \bigoplus_{j=1}^{p-1} \F_q[t]/ \langle t- a_j\rangle.\]
(Here $\Phi_p(t)=t^{p-1}+t^{p-2} + \dots + t +1$.) 
Each curve $\eta_j$  will correspond to  a generator of the $\F_q[t]/ \langle t- a_j \rangle$- summand above, in a way we will make precise. 

\begin{prop}\label{prop:knotexists}
For any odd prime $p$, there exists a prime $q$ and a knot $J$ such that 
\[H_1(\Sig_p(J), \F_q) \cong \F_q[t]/ \langle \Phi_p(t) \rangle \cong \bigoplus_{j=1}^{p-1} \F_q[t] / \langle t- a_j \rangle,\]
where $a_1, \dots, a_{p-1}$ are distinct elements of $\F_q$ and $a_{p-j} \equiv a_j^{-1} \mod q$. 
\end{prop}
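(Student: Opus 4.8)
The plan is to split the statement into an arithmetic part, which chooses the prime $q$ and controls the factorization of $\Phi_p$, and a topological part, which realizes a knot $J$ with the prescribed homology of its $p$-fold branched cover. The tool linking the two is the standard identification of branched-cover homology with a quotient of the Alexander module: when $J$ has cyclic Alexander module $\Lambda/(\Delta_J(t))$ over $\Lambda = \Z[t^{\pm 1}]$, one has, with $\F_q$ coefficients and $t$ acting as the covering transformation,
\[ H_1(\Sig_p(J); \F_q) \cong \F_q[t]/\big(\overline{\Delta}_J(t),\, t^p - 1\big) \cong \F_q[t]/\big(\gcd(\overline{\Delta}_J,\, t^p-1)\big), \]
where $\overline{\Delta}_J$ is the reduction of $\Delta_J$ modulo $q$. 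Everything then reduces to arranging $\gcd(\overline{\Delta}_J, t^p - 1) = \Phi_p(t)$ in $\F_q[t]$, together with the stated splitting.

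For the arithmetic part I would first invoke Dirichlet's theorem to pick a prime $q \equiv 1 \pmod p$ (so in particular $q \neq p$, and $q$ is odd since $p$ is odd). Then $\F_q^\times$ is cyclic of order divisible by $p$, hence contains a primitive $p$-th root of unity $\zeta$, and $t^p - 1 = \prod_{j=0}^{p-1}(t - \zeta^j)$ splits into distinct linear factors over $\F_q$. Deleting the factor $t - 1$ gives $\Phi_p(t) = \prod_{j=1}^{p-1}(t - \zeta^j)$, so setting $a_j := \zeta^j$ produces distinct elements of $\F_q$ and the second claimed isomorphism by the Chinese Remainder Theorem. The pairing condition is then automatic: $a_{p-j} = \zeta^{p-j} = \zeta^{-j} = a_j^{-1}$.

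For the topological part it remains to produce a knot $J$ with cyclic Alexander module whose Alexander polynomial satisfies $\Phi_p(t) \mid \overline{\Delta}_J(t)$ and $\overline{\Delta}_J(1) \neq 0$ in $\F_q[t]$; the displayed formula then forces $\gcd(\overline{\Delta}_J, t^p - 1) = \Phi_p$, giving the first isomorphism. I would obtain such a $\Delta_J$ from the classical realization theorem for Alexander polynomials (Seifert, Levine): it suffices to exhibit a symmetric integer polynomial $\Delta$ with $\Delta(1) = \pm 1$ that reduces modulo $q$ to a unit multiple of $\Phi_p$. Crucially there is no tension between $\Delta(1) = \pm 1$ and $\Phi_p \mid \overline{\Delta}$, precisely because $\Phi_p(1) = p$ is a unit in $\F_q$; concretely one takes $\Delta = c\,\Phi_p + q\, r(t)$ for an integer $c$ with $cp \equiv \pm 1 \pmod q$ and a symmetric correction $r$ fixing the value at $1$, then realizes $\Delta$ by a knot with Alexander module $\Lambda/(\Delta)$.

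The main obstacle is the topological realization with control of the \emph{module} structure, not merely of the polynomial. I must guarantee that $H_1(\Sig_p(J); \F_q)$ is exactly the cyclic module $\F_q[t]/\langle \Phi_p\rangle$, which requires that the Alexander module be cyclic (so the branched-cover formula applies), that $\Phi_p$ divide $\overline{\Delta}_J$ only to first order (so the covering transformation $t$ acts semisimply with each primitive $p$-th root of unity a simple eigenvalue), and that the factor $t-1$ genuinely drop out, i.e. $\overline{\Delta}_J(1) \neq 0$. This eigenvalue data is what later lets the curves $\eta_j$ of the downstream construction correspond to the individual summands $\F_q[t]/\langle t - a_j\rangle$, so I would keep the realization explicit enough — for instance via a Seifert matrix in companion form for $\Delta$ — to read off generators of the summands; verifying that such a Seifert form does produce the claimed $\F_q[t]$-module structure on the branched cover is the technical heart of the argument.
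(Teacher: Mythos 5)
Your proposal is correct and takes essentially the same route as the paper: choose a prime $q \equiv 1 \pmod p$, use the Seifert--Levine realization theorem to produce a knot with cyclic Alexander module $\Z[t^{\pm 1}]/\langle \Delta \rangle$ where $\Delta = c\,\Phi_p + q\,r(t)$ and $\Delta(1)=\pm 1$ (the paper takes exactly $\Delta = k\Phi_p(t) - qt^{(p-1)/2}$ with $q = kp+1$), and then compute $H_1(\Sig_p(J),\F_q) \cong \F_q[t]/\langle \Phi_p(t)\rangle$ and split it by the distinct roots of $\Phi_p$ over $\F_q$. Your indexing $a_j = \zeta^j$ by powers of a primitive $p$-th root of unity even yields the pairing condition $a_{p-j} = a_j^{-1}$ slightly more cleanly than the paper's reordering argument.
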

\begin{proof}
Let $q$ be a prime which is equivalent to $1$ mod $p$, so we can write $q=kp+1$ for some $k \in \N$. Note that $0<k<q$ and so $k$ is a unit mod $q$. 
Let $a(t)= k\Phi_p(t) -q t^{\frac{p-1}{2}}$. Observe that $a(t)$ is a symmetric polynomial with $a(1)= kp-q=-1$. Levine's work \cite{Levine:1965}  characterizing the Alexander polynomials of knots implies that there is a knot with Alexander polynomial equal to $a(t)$. In fact, his construction gives 
a knot $J$ with Alexander module given by $H_1(\widetilde{X_J}) \cong \Z[t, t^{-1}]/ \langle a(t) \rangle$. So
\[ H_1(\Sig_p(J), \F_q) \cong H_1(\widetilde{X_J}, \F_q)/ \langle t^p-1 \rangle  \cong \F_q[t]/ \langle a(t), t^p-1 \rangle \cong \F_q[t]/ \langle k \Phi_p(t), t^p-1 \rangle \cong \F_q[t]/ \langle \Phi_p(t) \rangle.
\]
It is a standard fact of number theory that since the order of $q$ mod $p$ is 1, $\Phi_p(t)$ splits into linear factors over $\F_q$. In addition, $\Phi_p(t)$ has  distinct roots, as one can easily verify by considering $f(t)=(t-1) \Phi_p(t)= t^p-1$. Since the only root of $f'(t)= pt^{p-1}$ over $\F_q$ is $t=0$, we have that $f'(t)$ and $f(t)$ have no common roots and so $f(t)$ has no repeated roots over $\F_q$. So $\Phi_p(t)$ certainly cannot have repeated roots either, and there are distinct $a_1, \dots, a_{p-1
} \in \F_q$ such that
\[H_1(\Sig_p(J), \F_q) \cong \F_q[t]/ \langle \Phi_p(t) \rangle\cong \bigoplus_{j=1}^{p-1} \F_q[t]/\langle t- a_j \rangle.\]
Note that this decomposition is canonical, since the $\F_q[t]/ \langle t-a_j \rangle$ summand is exactly the eigenspace of the action of $t$ corresponding to eigenvalue $a_j$. Since $\Phi_p(a)=0$ if and only if $\Phi_p(a^{-1})=0$, after reordering we can also assume that $a_{p-j} \equiv a_j^{-1} \mod q$. 
\end{proof} 

Now fix an odd prime $p$ and let $J_0$ be as in Proposition~\ref{prop:knotexists}. 
For $j=1, \dots, p-1$, let $x_j \in H_1(\Sig_p(J_0), \F_q)$ be an arbitrary generator of the $\F_q[t]/ \langle t - a_j \rangle$ summand (i.e., $x_j$ is an eigenvector of the covering transformation induced action on $H_1(\Sig_p(J_0), \F_q)$ with eigenvalue $a_j$).   Choose elements $\alpha_j \in \pi_1(X_p(J_0)) \subseteq \pi_1(X(J_0))$ which map to $x_j$ under the natural map $\pi_1(X_p(J_0)) \to \pi_1(\Sig_p(J_0)) \to H_1(\Sig_p(J_0)) \to H_1(\Sig_p(J_0)), \F_q)$. 
 Now choose curves $\eta_1, \dots, \eta_{p-1}$ in the complement of $J_0$
 such that $\eta_j$ represents $a_j$  in $\pi_1(X(J_0))$ for each $j=1, \dots, p-1$.
Notice that changing the crossings of the $\eta_j$ curves with each other does not change this property, and so by crossing changes we can assume that $\cup_{j=1}^{p-1} \eta_i$ is an unlink in $S^3$. For a choice of knots $A_1, \dots, A_{p-1}$, denote by $J(A_1, \dots, A_{p-1})$ the knot obtained by infecting $J_0$ by $A_i$ along $\eta_j$ for $j=1, \dots p-1$. Note that since $\cup_{j=1}^{p-1} \eta_i$ is an unlink we can consider this infection as a $(p-1)$-fold iterated satellite operation, and Theorem~\ref{thm:Litherland0} applies. In particular, observe that for each $j$ the homology classes of the $p$ lifts of $\eta_j$ are given by $\{t^k x_j= a_j^k x_j \}_{k=1}^p$. Theorem~\ref{thm:Litherland0} then implies that given any character $\chi: H_1(\Sig_p(J_0)) \to \Z_q$, and under the natural identification of $H_1(\Sig_p(J(A_1, \dots A_{p-1})))$ with $H_1(\Sig_p(J_0))$, we have 
\begin{align}\label{eq:cgformulaj}
\sig_1 \tau(J(A_1, \dots A_{p-1}), \chi) &= \sig_1 \tau(J_0, \chi) + \sum_{j=1}^{p-1} \left[ \sum_{k=1}^p \sig_{A_j}\left(\om_q^{\chi(t^k x_j)}\right) \right]
\end{align}
%
%

By the proof of Theorem 1 of Cha-Livingston \cite{Cha-Livingston:2004}, for any $\omega \in S^1$ there is some $\omega' \in S^1$ arbitrarily close to $\omega$ and a knot $K$ whose jumps in the Tristram-Levine signature function occur exactly at $\omega'$ and $\overline{\omega'}$. In particular, there exists a knot $C$ such that the only jumps in the Tristram-Levine signature $\sig_C(\omega)$ occur just before $\omega_q$ and just after $\overline{\omega_q}$, and hence one such that 
\begin{align*} 
\sig_C(\omega_q^k)= 
\left\{
 \begin{array}{ll} 
0 & k\equiv 0 \mod q \\
 \sig_C(\omega_q)> 0 & k \not \equiv 0 \mod q
  \end{array} 
    \right. .
\end{align*}

Now also fix $g \geq 0$. Let $\displaystyle c= \max_{\chi: H_1(\Sig_p(J_0)) \to \Z_q}\{ |\sig_1 \tau(J_0, \chi)|\}$. By taking sufficiently large connected sums of $C$, we can obtain knots $A$ and $B$ such that for all $1 \leq i \leq q-1$ we have
\begin{align*}
p\sig_{A}(\om_q^i)&= p\sig_A(\om_q)> 2(g+1)c+2pg \\
p\sig_B(\om_q^i)&=p\sig_B(\om_q)>(g+1)p \sig_A(\om_q)+ 2(g+1)c+2pg.
\end{align*}
Recall that for any $1 \leq j \leq p-1$ and $1 \leq k \leq p$,  we have $t^k x_j= a_j^k x_j$ for some nonzero eigenvalue $a_j$. It follow that  $\chi(t^k x_j)=\chi(a_j^k x_j)= a_j^k \chi(x_j)$ is congruent to 0 mod $q$ if and only if $\chi(x_j) \equiv 0 \mod q$. 
 It follows that for $1 \leq j \leq p-1$ we have 
$ \displaystyle
\sum_{k=1}^{p} \sig_{A}\left(\om_q^{\chi(t^k x_j)}\right) = \left\{ \begin{array}{cc}
p \sig_A(\om_q) & \text{ if } \chi(x_j) \not \equiv 0 \mod q \\
0 & \text{ if } \chi(x_j) \equiv 0 \mod q
\end{array}
\right. $, as well as an analogous formula for $B$. 

We choose $A_1= A_2 = \dots = A_{(p-1)/2}=A$ and $A_{(p+1)/2}= \dots= A_{p-1}=B$, and let $J_{A,B}= J(A_1, \dots, A_{p-1})$. The key point here is that since $a_{j}^{-1} \equiv a_{p-j} \mod q$ for all $j$, we  infect curves corresponding to eigenvalues $a$ and $a^{-1}$ with different knots.

 Now define  $\displaystyle \delta_j(\chi)= \left\{ \begin{array}{cc} 1 &\text{ if }\chi(x_j) \not \equiv 0 \mod q \\ 0 &\text{ if }\ \chi(x_j) \equiv 0 \mod q \end{array} \right.$, 
and observe that Equation~\ref{eq:cgformulaj} becomes
\begin{align}\label{eq:cgformula2}
\sig_1 \tau(J_{A,B}, \chi)= \sig_1 \tau(J_0, \chi) +
p \sig_A(\om_q) \sum_{j=1}^{\frac{p-1}{2}} \delta_j(\chi)+
p \sig_B(\om_q) \sum_{j=\frac{p+1}{2}}^{p-1} \delta_j(\chi)
\end{align}


\begin{thm}\label{thm:exampleswork}
For fixed odd $p$ and $g\geq 0$, let $J_g= J_{A,B}$ be as above, and let $K= \#^{g+1} J_g$. 
Then the Casson-Gordon signatures associated to the $p$th cyclic branched cover show that $g_4(K \# -K^r)>g$. 
\end{thm}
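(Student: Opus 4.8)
The plan is to argue by contradiction, combining Gilmer's genus bound (Theorem~\ref{thm:Gilmer}) with the eigenspace structure of the covering action. Suppose $g_4(K \# -K^r) \le g$. The first point is that the Tristram--Levine signatures of $K\#-K^r$ vanish identically: reversal preserves $\sig$ while mirroring negates it, so $\sig_{-K^r}(\om)=-\sig_K(\om)$ and the two summands cancel. Hence the correction term $\sum_{i=1}^p \sig_{K\#-K^r}(\om_p^i)$ in Theorem~\ref{thm:Gilmer} is zero, and applying that theorem with the prime power $n=p$ produces a covering-transformation-invariant subgroup $A_1\oplus B \subseteq H_1(\Sig_p(K\#-K^r))$ with $\rank A_1 \le 2(p-1)g$ and $|B|^2=|A_2|$, such that every prime-power-order character $\chi$ vanishing on $A_1\oplus B$ satisfies $|\sig_1\tau(K\#-K^r,\chi)|\le 2pg$. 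The whole game is then to exhibit one such $\chi$ violating this inequality.

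Next I would pass to $\F_q$-coefficients. The construction of $J_0$ forces $H_1(\Sig_p(J_0))$ to be an elementary abelian $q$-group of rank $p-1$ (its order is $q^{p-1}$ and $H_1(\Sig_p(J_0),\F_q)\cong\F_q^{p-1}$), and infection preserves this, so $V:=H_1(\Sig_p(K\#-K^r),\F_q)$ is a $2(g+1)(p-1)$-dimensional $\F_q$-vector space carrying the covering action $t$. Because everything is now an $\F_q$-vector space, $|B|^2=|A_2|$ gives $\dim_{\F_q}B=\tfrac12\dim_{\F_q}A_2$, whence $\dim_{\F_q}(A_1\oplus B)=\dim A_1+\tfrac12(\dim V-\dim A_1)\le (p-1)(2g+1)<\dim V$. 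Since $A_1\oplus B$ is $t$-invariant, $V$ splits as $\bigoplus_j E_{a_j}$ into $t$-eigenspaces, and the strict dimension inequality guarantees that some eigenspace $E_{a_{j_0}}$ is not contained in $A_1\oplus B$. I would then choose a nonzero functional $\chi$ supported on $E_{a_{j_0}}$ and vanishing on $(A_1\oplus B)\cap E_{a_{j_0}}$, extended by zero; such $\chi$ automatically vanishes on all of $A_1\oplus B$.

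The decisive computation is the value of $\sig_1\tau(K\#-K^r,\chi)$ for this localized $\chi$. Using additivity over the connected sum $K\#-K^r=(\#^{g+1}J_g)\#(\#^{g+1}(-J_g^r))$, the satellite formula (Equation~\ref{eq:cgformula2}), and the reversal formula (Proposition~\ref{prop:reversal}), the invariant splits as a bounded error (at most $2(g+1)c$, from the $J_0$ contributions) plus terms $\pm p\sig_A(\om_q)$ and $\pm p\sig_B(\om_q)$. The crucial structural point---precisely the asymmetry the construction was built to exploit---is that within a single eigenspace $E_{a_{j_0}}$ the $K$-coordinate directions and the $-K^r$-coordinate directions carry \emph{opposite} labels: one family is ``$A$-type'' and the other ``$B$-type'', because $a_{j_0}$ and $a_{j_0}^{-1}=a_{p-j_0}$ were infected with different knots and Proposition~\ref{prop:reversal} sends eigenvalue $a$ to $a^{-1}$ on the $-K^r$ summand (with an overall sign change). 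Thus $\sig_1\tau(K\#-K^r,\chi)$ contains $\sig_A(\om_q)$ and $\sig_B(\om_q)$ each with integer coefficient of size at most $g+1$, and since $\chi\neq 0$ at least one coefficient is nonzero. The inequalities $p\sig_A(\om_q)>2(g+1)c+2pg$ and $p\sig_B(\om_q)>(g+1)p\sig_A(\om_q)+2(g+1)c+2pg$ were chosen exactly so that in every case $|\sig_1\tau(K\#-K^r,\chi)|>2pg$, contradicting Gilmer's bound and forcing $g_4(K\#-K^r)>g$.

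The main obstacle, where I would be most careful, is making the eigenspace bookkeeping rigorous so that no cancellation can occur: one must check that confining $\chi$ to a single eigenspace restricts the signature to one $\sig_A$-labelled family and one $\sig_B$-labelled family, so their coefficients are bounded by $g+1$ and cannot conspire to kill the dominant $\sig_B$ term, and one must verify the strict inequality $\dim(A_1\oplus B)<\dim V$, which is exactly where the choice $K=\#^{g+1}J_g$ of $g+1$ summands defeats the rank-$2(p-1)g$ contribution of $A_1$. Both steps rely essentially on the covering-transformation invariance of $A_1\oplus B$ supplied by Theorem~\ref{thm:Gilmer}; without it the eigenspace localization---and hence the control of cancellation---would be unavailable.
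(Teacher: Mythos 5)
Your proposal is correct and takes essentially the same route as the paper: Gilmer's theorem applied to the $p$-fold cover (with the Tristram--Levine correction term vanishing for $K \# -K^r$), the eigenspace decomposition of $H_1(\Sig_p(K\#-K^r),\F_q)$ via covering-transformation invariance, a character localized to an eigenspace $E_{a_{j_0}}$ not contained in the invariant subgroup, and the $A$/$B$ asymmetry across inverse eigenvalue pairs to force $|\sig_1\tau(K\#-K^r,\chi)|>2pg$. The only difference is organizational: you argue by contradiction directly with $A_1\oplus B$ (and spell out the rank bound and the vanishing of the signature correction, which the paper leaves as ``easy to check''), whereas the paper isolates an intermediate claim about all invariant subgroups of rank $(p-1)(2g+1)$.
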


\begin{proof}
As in Proposition~\ref{prop:reversal}, our identification $H_1(\Sig_p(J_g), \F_q) \cong H_1(\Sig_p(J_0), \F_q) \cong \bigoplus_{j=1}^{p-1}\left( \F_q[t]/ \langle t- a_j \rangle \right) \langle x_j\rangle$ induces a description of $H_1(\Sig_p(-J_g^r), \F_q)$ as $\bigoplus_{j=1}^{p-1}\left( \F_q[t]/ \langle t^{-1}- a_j \rangle \right) \langle y_j\rangle$, such that 
$ \sig_1 \tau(-J_g^r, \chi: y_j \mapsto c_j) = - \sig_1\tau(J_g, \chi: x_j \mapsto c_j).$
We therefore have that
\begin{align}\label{H1}
H_1(\Sig_p(K \#-K^r), \F_q)&\cong \bigoplus_{i=1}^{g+1} H_1(\Sig_p(J_g), \F_q) \oplus \bigoplus_{i=1}^{g+1} H_1(\Sig_p(-J_g^r), \F_q) \\
&= \bigoplus_{i=1}^{g+1} \bigoplus_{j=1}^{p-1}\left( \F_q[t]/ \langle t- a_j \rangle \right) \langle x^i_j\rangle \oplus \bigoplus_{i=1}^{g+1} \bigoplus_{j=1}^{p-1}\left( \F_q[t]/ \langle t^{-1}- a_j \rangle \right) \langle y^i_j\rangle
\end{align}

For any $\chi= \bigoplus_{i=1}^{g+1} \chi_i \oplus \bigoplus_{i=1}^{g+1}\chi_i'$ and for $1 \leq j \leq p-1$, define $n_j(\chi)$ and $n_j'(\chi)$ as follows: 
$ n_j(\chi)= \sum_{i=1}^{g+1} \delta_j(\chi_i)$ and 
$n_j'(\chi)=  \sum_{i=1}^{g+1} \delta_j(\chi_i')$.
By the additivity of Casson-Gordon signatures and 
Equation~\ref{eq:cgformula2}, we have that 
\begin{align*}
 \label{nexteq}
 \sig_1 \tau (K \# -K^r, \chi) &= \sum_{i=1}^{g+1} \sig_1 \tau(J, \chi_i) + \sum_{i=1}^{g+1} \sig_1\tau(-J^r, \chi_i') \\
&=  \sum_{i=1}^{g+1} \left(\sig_1 \tau(J_0, \chi_i) + \sum_{j=1}^{p-1} \delta_j(\chi_i) p \sig_{A_j}(\om_q) \right) -
 \sum_{i=1}^{g+1} \left(\sig_1 \tau(J_0, \chi_i') + \sum_{j=1}^{p-1} \delta_j(\chi_i') p \sig_{A_j}(\om_q) \right)
\\
&= \sum_{i=1}^{g+1} \left( \sig_1 \tau(J_0, \chi_i)- \sig_1 \tau(J_0, \chi_i')\right) 
 + \sum_{j=1}^{p-1} \left(n_j(\chi)- n_j'(\chi)\right) p \sig_{A_j}(\om_q)\\
 &=\sum_{i=1}^{g+1} \left( \sig_1 \tau(J_0, \chi_i)- \sig_1 \tau(J_0, \chi_i')\right) 
 + p \sig_A(\om_q) \sum_{j=1}^{\frac{p-1}{2}} \left(n_j(\chi)- n_j'(\chi)\right)\\
& \hspace{6cm} + p \sig_B(\om_q) \sum_{j=\frac{p+1}{2}}^{p-1} \left(n_j(\chi)- n_j'(\chi)\right)
\end{align*}

Note that  $H_1(\Sig_p(K \#-K^r), \F_q)$ is isomorphic as a group to $\F_q^{(p-1)(2g+2)}$, and so a subgroup $H$ has rank  $r$ if and only if it has order  $q^r$. We wish to apply Theorem~\ref{thm:Gilmer} to conclude that $g_4(K \#-K^r)>g$, and  it is easy to check that it suffices to prove the following claim. 
\\

\noindent {\bf Claim:}
For every covering transformation invariant subgroup $H \leq H_1(\Sig_p(K \#-K^r), \F_q)$ of rank $(p-1)(2g+1)$ 
there exists $\chi:  H_1(\Sigma_p(K \# -K^r)) \to \F_q$ which vanishes on $H$ such that 
$|\sig_1\tau(K \#-K^r, \chi)| >2pg.$
\\

Let $H$ be as in the claim. Since $H$ is an invariant subspace and $H_1(\Sig_p(K \#-K^r), \F_q)$ is spanned by eigenvectors, $H$ has a basis of eigenvectors $\beta'$, as proven for instance in \cite{KL99b}. 
Let $B_j$ be the $a_j$-eigenspace of the covering transformation induced action on  $H_1(\Sig_p(K \#-K^r), \F_q)$, for $1 \leq j \leq p-1$.  Note $B_j$ has a basis $\beta_j= \{x^i_j\}_{i=1}^{g+1} \sqcup \{y^i_{p-j}\}_{i=1}^{g+1}$, and is rank $2g+2$. Since $H$ is spanned by eigenvectors, we have that 
\[ \sum_{j=1}^{p-1} \rank( B_j \cap H) = \rank(H)= (p-1)(2g+1).\]

Since $H \neq H_1(\Sig_p(K \#-K^r), \F_q)$ there is some $j_0$  such that $B_{j_0} \not \subset H$. Assume without loss of generality that $j_0 \leq \frac{p-1}{2}$. 
 Let $v_1$ be in $B_{j_0}$ but not in $H$. We can extend $\beta' \sqcup \{v_1\}$ to a basis $\beta''$ of eigenvectors for $H_1(\Sig_p(K \#-K^r), \F_q)$ by adding some $p-2$ vectors, $v_2, \dots, v_{p-1}$. 
Let $\chi$ be defined as follows on elements of $\beta''$, and extended linearly over $H_1(\Sig_p(K \#-K^r), \F_q)$:
\begin{align*}
\chi(v)= \left\{ \begin{array}{ll}
0 & \text{if } v \in \beta' \\
1 & \text{if } v=v_1 \\
0 & \text{if } v=v_i \text{ for } i =2, \dots, p-1
\end{array}\right..
\end{align*}
Observe that $\chi$ vanishes both on $H$ and on $B_j$ for all $j \neq j_0$.
We therefore have that $n_j(\chi)=0$ for $j \neq j_0$ and $n_j'(\chi) =0$ for $j \neq p-j_0$. Our formula for $\sig_1 \tau (K \# -K^r, \chi)$ therefore becomes
 \begin{align*}
 \sig_1 \tau (K \# -K^r, \chi)
 =\sum_{i=1}^{g+1} \left( \sig_1 \tau(J_0, \chi_i)- \sig_1 \tau(J_0, \chi_i')\right) 
 &+ p \sig_A(\om_q) n_{j_0}(\chi) -  p \sig_B(\om_q)n_{p-j_0}'(\chi).
\end{align*}
Since $\chi$ is not the zero character we must have that one of $n_{j_0}(\chi)$ and $n_{p-j_0}'(\chi)$ is  positive. 

Case 1: $n_{p-j_0}'(\chi)> 0$. 
Then, noting that $n_{j_0}(\chi) \leq g+1$, by our choice of $\sig_B(\om_q)$  we have 
 \begin{align*}
 \sig_1 \tau (K \# -K^r, \chi)
& \leq 2(g+1)c + p \sig_A(\om_q) (g+1) -  p \sig_B(\om_q)< -2pg.
\end{align*}

Case 2: $n_{p-j_0}'(\chi)=0$ and $n_{j_0}(\chi) >0$. 
Then by our choice of $\sig_A(\om_q)$ we have 
\begin{align*}
\sig_1 \tau(K \#-K^r, \chi) \geq -2(g+1)c+p \sig_A(\om_q) > 2pg. 
\qquad \qquad \qedhere
\end{align*}
\end{proof}

\bibliographystyle{alpha}
\bibliography{thesisbib}

\end{document}